\newtheorem{introthm}{Theorem}[section]
\newtheorem{introdef}[introthm]{Definition}
\newtheorem{introcor}[introthm]{Corollary}
\newtheorem{thm}{Theorem}[subsection]
\newtheorem{prop}[thm]{Proposition}
\newtheorem{cor}[thm]{Corollary}
\newtheorem{ex}[thm]{Example}
\newtheorem{rem}[thm]{Remark}
\definecolor{myteal}{HTML}{00797d}
\newcommand{\BB}{\mathbb{B}}
\newcommand{\RR}{\mathbb{R}}
\newcommand{\CC}{\mathbb{C}}
\title[NON-EXISTENCE OF FREE BOUNDARY MINIMAL MÖBIUS BANDS IN $\BB^3$]{NON-EXISTENCE OF FREE BOUNDARY MINIMAL MÖBIUS BANDS IN THE UNIT THREE-BALL}
\author{Carlos Andrés Toro Cardona}
\date{}
\begin{document}

\begin{abstract}
We prove the impossibility of constructing free boundary minimal Möbius bands in the Euclidean ball $\mathbb{B}^3$. This answers in the negative a question proposed by I. Fernández, L. Hauswirth and P. Mira.
\end{abstract}

\maketitle

\section{Introduction}
\noindent This paper concerns minimal surfaces in $\RR^n$ that meet $\BB^n$ orthogonally, where
\begin{equation*}
    \mathbb{B}^n=\{(x_1,x_2,\ldots,x_n)\in \mathbb{R}^n| x_1^2+x_2^2+\ldots+x_n^2\leq 1\}\subset \mathbb{R}^n,
\end{equation*}
is the $n$-dimensional Euclidean unit ball.
\begin{introdef}\label{def fbbmi}
    Let $(\Sigma,h)$ be a compact orientable Riemannian surface with boundary. A branched minimal immersion of $(\Sigma,h)$ in $\RR^n$ is a non-constant map $X:(\Sigma,h) \rightarrow \mathbb{R}^n$ such that:
\begin{itemize}
    \item[(i)] $X\in C^0(\Sigma, \mathbb{R}^n)\cap C^2(\mathring{\Sigma}, \mathbb{R}^n)$,
    \item[(ii)] $X$ is harmonic, and
    \item[(iii)] $X$ is almost conformal.
    \end{itemize}
Moreover, if the map $X:(\Sigma,h) \rightarrow \mathbb{R}^n$ also satisfies
    \begin{itemize}
    \item[(iv)] $X(\partial \Sigma)\subset \partial \mathbb{B}^n$, and
    \item[(v)] $\frac{\partial X}{\partial \nu^{\Sigma}}\perp \partial \mathbb{B}^n$ along $\Sigma \cap \partial \mathbb{B}^n$,
\end{itemize}
where $\nu^{\Sigma}$ is the unit conormal vector of $\Sigma$, then we say $X$ is a branched free boundary minimal immersion in the unit ball $\BB^n$.
\end{introdef}
\noindent We remark that by regularity theory, the map $X$ extends smoothly up to the boundary \cite{JN1975}, so (v) is well-defined.\\\\
\noindent Let $X:(\Sigma,h)\rightarrow \RR^n$ be a branched minimal immersion. It is a well-known fact that we can always cover $\Sigma$ by an isothermic atlas \cite{CHERN1955}. A point $p\in \Sigma$ is called a \textit{branch point} if $|X_u|_{\mathbb{R}^3}(0)=|X_v|_{\mathbb{R}^3}(0)=0$, where $z=u+iv$ is an isothermic chart around the point $p$. In the absence of branch points, we say that the map $X$ is a minimal \textit{immersion}. Moreover, when the map $X$ is injective and free of branch points, we say it is a minimal \textit{embedding}. \\\\ The existence of free boundary minimal surfaces of given topological type has been the subject of intensive research lately and several constructions have been made. We begin the discussion with examples of free boundary minimal embeddings in the particular case of the unit ball $\BB^3$.\\\\ The simplest free boundary minimal surfaces are the totally geodesic equatorial disks, which are the intersections of planes passing through the origin with the unit ball. The only other rotationally symmetric free boundary minimal surface is the critical catenoid.\\\\ 
In a seminal paper \cite{FS16}, R. Schoen and A. Fraser discovered a close relationship between the existence problem of free boundary minimal surfaces and the Steklov eigenvalue problem. As a result they were able to provide embedded orientable examples with genus zero and any number of boundary components.\\\\ Other techniques, like min-max theory and gluing methods, have been used to construct embedded free boundary minimal surfaces of either large genus or with a large number of boundary components. On one hand, there are examples \cite{AFOLHA}, \cite{KAPOULEASZOU}, of both genus zero or one with a large number of boundary components. On the other hand there are also examples \cite{CSW22}, \cite{KAPOULEASLI}, \cite{KM20}, \cite{KW17}, \cite{KETOVERG0}, of free boundary minimal surfaces, of high genus with three and four boundary components.\\\\ It is also known by the work of A. Carlotto, G. Franz and M. Schulz \cite{Franz} that for one boundary component and arbitrary genus it is always possible to construct an embedded orientable free boundary minimal surface in the unit ball with exactly that topological type. In a very recent breakthrough, M. Karpukhin, R. Kusner, P. McGrath and D. Stern proposed a way to realize any compact orientable surface with boundary as an embedded free boundary minimal surface in the unit ball $\mathbb{B}^3$ \cite[Theorem 1.2]{KarpukhinStern}.\\\\
There are also immersed but not embedded free boundary minimal surfaces in $\mathbb{B}^3$. I. Fernández, L. Hauswirth and P. Mira in their work \cite{Isabel} construct free boundary minimal annuli immersed on the unit ball with a discrete group of symmetries. In particular, they are not isometric to the critical catenoid. Notice how this construction is relevant regarding the critical catenoid conjecture \cite[Open Question 5]{MLI}, which states that any free boundary minimal \textit{embedded} annuli in the unit ball is the critical catenoid.\\\\We also have the notion of a free boundary minimal non-orientable surface:
\begin{introdef}\label{def fbbmi non-orientable}
    Let $(M,h)$ be a compact non-orientable Riemannian surface with boundary. Denote by $\widehat{M}$ the orientable double cover of $M$ with the pull-back metric under the covering map $\pi: \widehat{M}\rightarrow M$. A branched minimal immersion of $(M,h)$ in $\mathbb{R}^n$ is a non-constant map $X: (M,h) \rightarrow \mathbb{R}^n$ whose lift $\widehat{X}=X\circ \pi: (\widehat{M},\pi^{*}h) \rightarrow \mathbb{R}^n$ satisfies properties (i), (ii) and (iii) in Definition \ref{def fbbmi}. Moreover, if the lift map $\widehat{X}$ also satisfies properties (iv), and (v), then we say $X$ is a free boundary branched minimal immersion of $(M,h)$ in $\mathbb{B}^n$.
\end{introdef} 
\noindent So far nobody has found an example of a non-orientable free boundary minimal surface in $\mathbb{B}^3$. By topological reasons, such a non-orientable surface would have to be immersed and not embedded. I. Fernández, L. Hauswirth and P. Mira in \cite{Isabel} proposed the following question:\\\\
\textbf{Open Problem.} \textit{Are there free boundary minimal Möbius bands in $\mathbb{B}^3$?}\\\\
We provide a negative answer to this question, even if we allow branched immersions:
\begin{introthm}\label{ppal theorem}
    There are no free boundary branched minimal Möbius bands in $\mathbb{B}^3$.
\end{introthm}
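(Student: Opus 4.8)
The plan is to pass to the orientable double cover of the band and play the free boundary condition off against the deck transformation by means of the Hopf differential. Suppose, for contradiction, that $X\colon M\to\BB^3$ is a free boundary branched minimal Möbius band. Its lift $\widehat X=X\circ\pi\colon\widehat M\to\BB^3$ to the orientable double cover is then a free boundary branched minimal immersion of an annulus $\widehat M$, and $\widehat X\circ\tau=\widehat X$, where $\tau$ generates the deck group of $\pi$: a fixed point free, orientation reversing, hence anti-conformal, involution. Realising $\widehat M$ conformally as a flat cylinder with coordinate $w=t+i\theta$, $|t|\le T$, $\theta\in\RR/2\pi\mathbb{Z}$, one checks that, after an irrelevant rotation, $\tau(w)=i\pi-\bar w$, i.e. $(t,\theta)\mapsto(-t,\theta+\pi)$.

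Next, consider the holomorphic Hopf differential $\widehat\Phi=\widehat h\,dw^2$, $\widehat h=\langle\widehat X_{ww},N\rangle$, where $N$ is the Gauss map on the immersed part of $\widehat X$. Because near a branch point $\widehat X_{ww}$ is tangent to the limiting tangent plane, $\widehat h$ extends holomorphically across the branch points and vanishes there, so $\widehat\Phi$ is a genuine holomorphic quadratic differential on the annulus. The free boundary condition forces the boundary circles to be lines of curvature of $\widehat X$ — along $\partial\widehat M$ the conormal coincides with the (outer unit) position vector, which annihilates the mixed term of the second fundamental form — so $\widehat h$ is real on $\partial\widehat M$. A holomorphic function on the annulus that is real on both boundary circles extends, by iterated Schwarz reflection, to an entire doubly periodic function, hence is constant; thus $\widehat h\equiv c$ for a real constant $c$.

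On the other hand, differentiating the identity $\widehat X(i\pi-\bar w)=\widehat X(w)$ twice gives $\widehat X_{ww}(\tau w)=\overline{\widehat X_{ww}(w)}$, and since $\tau$ reverses orientation while leaving the image pointwise fixed, the Gauss map satisfies $N\circ\tau=-N$ on the immersed part (which is $\tau$-invariant, the branch locus being $\tau$-invariant). Hence $\widehat h(\tau w)=\langle\overline{\widehat X_{ww}(w)},-N(w)\rangle=-\overline{\widehat h(w)}$ on the dense immersed set, and therefore everywhere by continuity. Combined with $\widehat h\equiv c\in\RR$ this gives $c=-\overline c=-c$, so $c=0$ and $\widehat\Phi\equiv0$. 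Thus the second fundamental form of $\widehat X$ vanishes wherever $\widehat X$ immerses, so $\widehat X(\widehat M)$ lies in an affine plane $P$. Writing $\widehat X$ in complex affine coordinates on $P\cong\CC$, the conformality relation $\langle\widehat X_w,\widehat X_w\rangle=0$ shows that $\widehat X$ is holomorphic or anti-holomorphic as a $\CC$-valued map, and it is non-constant; but $\widehat X\circ\tau=\widehat X$ with $\tau$ anti-holomorphic then makes $\widehat X$ simultaneously holomorphic and anti-holomorphic, a contradiction. Hence no such band exists.

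The conceptual heart of the argument is the identity $\widehat h\circ\tau=-\overline{\widehat h}$: an anti-holomorphic deck transformation conjugates the Hopf differential with an extra sign, which is incompatible with its being a real constant unless it vanishes, and once the surface is planar the anti-holomorphic-involution structure is self-contradictory. The main point requiring care is the treatment of branch points: verifying that $\widehat\Phi$ extends holomorphically across them (including across branch points on $\partial\widehat M$, by continuity), that the branch locus is $\tau$-invariant so that $N\circ\tau=-N$ holds on every immersed point, and that the boundary regularity of $\widehat X$ is good enough to pass from "$\widehat h$ real on $\partial\widehat M$" to "$\widehat h$ constant" via reflection.
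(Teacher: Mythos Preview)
Your argument is correct and follows the same overall architecture as the paper: pass to the orientable double cover, show the Hopf differential is a real constant (in suitable coordinates) from the free boundary condition, derive that the deck involution sends it to minus its conjugate, and conclude it vanishes, then rule out the planar case.

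The main substantive difference is in how you obtain the transformation law $\widehat h\circ\tau=-\overline{\widehat h}$. The paper derives it from the Enneper--Weierstrass representation: it first proves transformation rules for the data $(g,f)$ under $T(z)=-1/\bar z$ (namely $g\circ T=T\circ g$ and $f\circ T=-\overline{f\,(zg)^2}$), and then computes $T^*\bigl(-\tfrac12 g'f\,dz^2\bigr)$. You instead differentiate the invariance $\widehat X\circ\tau=\widehat X$ twice and combine it with $N\circ\tau=-N$, which is shorter and avoids Weierstrass data altogether. Your route buys elementarity; the paper's route situates the computation within the classical representation and makes contact with the explicit examples (Henneberg, Meeks). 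A second minor difference: the paper rules out planarity \emph{first} (the normal extends across branch points and would have to flip under $\tau$ while staying constant in direction), so the Hopf constant is nonzero from the outset; you allow $c=0$ and then exclude the planar case via the holomorphic/anti-holomorphic dichotomy for $\widehat X$ as a $\CC$-valued map. Both are valid.

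The technical caveats you flag (holomorphic extension of $\widehat\Phi$ across interior and boundary branch points, $\tau$-invariance of the branch locus, and boundary regularity sufficient for Schwarz reflection) are exactly the ones the paper handles, so there is no gap.
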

\noindent The argument is based on the study of the Hopf quadratic differential in the orientable double cover by the annulus. We show that in the free boundary case, it has an explicit form which is incompatible with the transformation law under the anti-holomorphic deck map which changes orientation.\\\\
We point out that there are branched minimal immersions of the open Möbius band in $\RR^3$. The Henneberg surface was the first one described in the literature \cite{Henneberg}, and it has exactly two branch points. An open Möbius band in $\mathbb{R}^3$ free of branch points and with finite total curvature $-6\pi$ was first discovered by W. Meeks \cite{Meeks}. Several generalizations have been made for both examples. D. Moya and J. Pérez constructed for every integer $m\geq 1$, an open Möbius band $H_m$ in $\RR^3$ with exactly $m+1$ branch points \cite{Joaquin}, recovering the classical Henneberg surface for $m=1$. On the other hand, M. Oliveira \cite{Oliveira} extended the work of W. Meeks by constructing complete, minimal Möbius bands free of branch points in $\RR^3$ with one end and finite total curvature $-2\pi m$ for any $m$ odd, $m\geq 3$. See \cite{Toru}, \cite{Mira} for related and further developments. Theorem \ref{ppal theorem} shows that it is not possible to intersect any of these open Möbius bands with a ball in $\RR^3$ in such a way that the intersection is orthogonal at every point.\\\\ It is also interesting to mention that R. Schoen and A. Fraser in \cite[Proposition 7.1]{FS16} found an embedded minimal Möbius band in $\mathbb{R}^4$ intersecting the unit ball $\BB^4$ orthogonally. Its induced metric achieves the supremum of the first non-zero normalized Steklov eigenvalue $\overline{\sigma_1}$ among all smooth metrics (see Section \ref{subsection application} for more details about the spectral problem). For higher normalized Steklov eigenvalues, the same maximization technique has been studied among the class of $\mathbb{S}^1$-invariant metrics on the Möbius band by A. Fraser and P. Sargent in \cite[Theorem 1.1] {SargentFraser}. For each $k\geq 1$, they construct a free boundary minimal immersion of a Möbius band in $\BB^4$, whose induced metric realizes the supremum of the $k$-normalized Steklov eigenvalue $\overline{\sigma_k}$ among all $\mathbb{S}^1$-invariant metrics.\\\\ The non-existence result of Theorem \ref{ppal theorem} has a consequence on the study of critical metrics for Steklov eigenvalues. In fact, by the work of A. Fraser and R. Schoen \cite[Proposition 2.4]{FraserSchoen2013}, we know that if there exists a maximizing metric of the $k$-normalized Steklov eigenvalue then there exists a free boundary minimal immersion by $k$-eigenfunctions in a unit ball of some dimension which is a lower bound on the multiplicity of the $k$-eigenspace. More generally, M. Karpukhin and A. Métras remarked that the result proven by R. Schoen and A. Fraser can be generalized to a wider notion of critical metrics \cite[Remark 2.3.2]{Karpukhin}. Therefore we can state the following consequence:
\begin{introcor}\label{introcor}
    If $h$ is a critical metric for the k-normalized Steklov eigenvalue $\overline{\sigma_k}$ on a Möbius band then the multiplicity of $\sigma_k$ is at least four.
\end{introcor}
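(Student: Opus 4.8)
The plan is to reduce the statement to Theorem \ref{ppal theorem} by means of the variational principle for critical Steklov metrics, together with two easy observations in low dimensions. By \cite[Proposition 2.4]{FraserSchoen2013}, in the form extended by \cite[Remark 2.3.2]{Karpukhin}, if $h$ is critical for $\overline{\sigma_k}$ on the Möbius band $M$, then there is a collection $u_1,\dots,u_m$ of linearly independent $\sigma_k$-eigenfunctions such that $\Phi=(u_1,\dots,u_m)\colon (M,h)\to\BB^m$ is a free boundary branched minimal immersion; in particular $m\le\operatorname{mult}(\sigma_k)$. Hence it suffices to show that there is no free boundary branched minimal Möbius band in $\BB^m$ for $m\in\{1,2,3\}$, since then $\operatorname{mult}(\sigma_k)\ge m\ge 4$.

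For $m=1$ this is immediate from the almost-conformality in Definition \ref{def fbbmi non-orientable}: if the lift $\widehat X$ to the orientable double cover $\widehat M$ takes values in $\RR$, then $|\widehat X_u|=|\widehat X_v|$ and $\widehat X_u\widehat X_v=0$ in any isothermal chart force $\widehat X_u\equiv\widehat X_v\equiv 0$, so $\widehat X$, and thus $X$, is constant --- a contradiction.

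For $m=2$ I would pass to the orientable double cover $\pi\colon\widehat M\to M$, which is an annulus, carrying the orientation-reversing deck involution $\tau$. The lift $\widehat X=X\circ\pi\colon\widehat M\to\RR^2\cong\CC$ is harmonic and almost conformal, so, after fixing an orientation of $\widehat M$, the function $\widehat X$ is either holomorphic or anti-holomorphic on the connected surface $\widehat M$ (the branch points being exactly its critical points). Since $\pi\circ\tau=\pi$ we have $\widehat X\circ\tau=\widehat X$, and because $\tau$ reverses orientation this identity forces $\widehat X$ to be simultaneously holomorphic and anti-holomorphic, hence constant --- again a contradiction. This is the only step where non-orientability of $M$ is essential, and it is the crux of the low-dimensional cases.

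Finally, the case $m=3$ is exactly Theorem \ref{ppal theorem}. Combining the three cases gives $m\ge 4$, and therefore $\operatorname{mult}(\sigma_k)\ge 4$, which is the assertion. The only substantive input is Theorem \ref{ppal theorem}; once it is granted, the remaining work is routine, with the case $m=2$ being the single point that requires a short argument rather than a direct citation.
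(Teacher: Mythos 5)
Your proposal is correct and follows essentially the same route as the paper: both reduce via the Fraser--Schoen/Karpukhin--M\'etras criticality criterion to the non-existence of free boundary branched minimal M\"obius bands in $\BB^m$ for $m\le 3$, with $m=3$ being Theorem \ref{ppal theorem}. The only cosmetic difference is in the case $m=2$, where you argue via the dichotomy holomorphic/anti-holomorphic for a harmonic almost conformal map into $\CC$, while the paper invokes its Remark that no branched minimal immersion of a non-orientable surface has planar image (a normal-vector continuity argument); both are valid.
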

\noindent The paper is organized as follows. In Section \ref{Section 2}, we focus on some general facts about free boundary minimal surfaces in the unit three-ball, giving special attention to the Hopf differential and its fundamental properties. We also review the Enneper-Weierstrass representation and discuss some examples of open Möbius bands in the Euclidean space. Then in Section \ref{Section 3}, we specialize on the topological type of an annulus and show that the Hopf differential of free boundary minimal annuli in the unit ball has a very specific form. In Section \ref{Section 4}, we analyze branched minimal immersions of Möbius bands in the Euclidean space and we find an explicit transformation law of the Hopf differential associated to the double cover by the annulus. We show that the specific form of the Hopf differential is incompatible with the transformation law under the deck transformation on the orientable double cover that changes orientation, which proves Theorem \ref{ppal theorem}. Finally in Section \ref{subsection application} we briefly introduce the Steklov eigenvalue problem and prove Corollary \ref{introcor}.\\\\ \textbf{Acknowledgements.} I express my gratitude to professor Lucas Ambrozio, for all the insightful conversations we had and for his guidance. I extend my gratitude to Alberto Cerezo, Henrique Nogueira and Mateus Spezia for fruitful discussions and valuable ideas. I also thank CAPES - Coordenação de Aperfeiçoamento de Pessoal de Nível Superior and FAPERJ - Fundação Carlos Chagas Filho de Amparo à Pesquisa do Estado do Rio de Janeiro for supporting this research project.

\section{Generalities about free boundary minimal surfaces}\label{Section 2}
\noindent In what follows $(\Sigma,h)$ will denote a compact orientable Riemannian surface with boundary $\partial \Sigma$, possibly empty. We denote by $\nu^{\Sigma}$ the outer unit conormal to $\Sigma$, which is a vector tangent to the surface, perpendicular to the boundary $\partial \Sigma$ and in the outer direction with respect to the interior $\mathring{\Sigma}$ of $\Sigma.$ A smooth chart $(U_{\alpha},\psi_{\alpha})$ on $\Sigma$, with $\psi_{\alpha}:U_{\alpha}\subset \mathbb{C}\rightarrow \Sigma$ and $z_{\alpha}=u+iv\in U_{\alpha}$, is called an \textit{isothermic chart} when $|\partial_u\psi_{\alpha}|_h=|\partial_v\psi_{\alpha}|_h$ and $\langle \partial_u \psi_{\alpha},\partial_v\psi_{\alpha}\rangle_h=0$. It is a well established theorem \cite{CHERN1955} the existence of an isothermic atlas of $\Sigma$. The transition functions $ \psi_{\alpha,\beta}=\psi_{\beta}^{-1}\circ \psi_{\alpha}$ of such an atlas are always holomorphic, inducing on $\Sigma$ the structure of a Riemann surface. For each isothermic chart $(U_{\alpha},z_{\alpha})$ on $\Sigma$ we define $\partial_{z_{\alpha}}=\frac{1}{2}(\partial_u-i\partial_v)$ and $\partial_{\Bar{z}_{\alpha}}=\frac{1}{2}(\partial_u+i\partial_v)$.\\\\
We clarify some aspects regarding Definition \ref{def fbbmi}. We say that the map $X:(\Sigma,h)\rightarrow \RR^n$ is harmonic, if each of its coordinate functions satisfy $\Delta_{\Sigma}x^{i}=0$ for all $1\leq i\leq n$. On the other hand, $X:(\Sigma,h) \rightarrow \mathbb{R}^n$ is said to be an almost conformal map if $|X_u|_{\mathbb{R}^n}=|X_v|_{\mathbb{R}^n}$ and $\langle X_u, X_v \rangle_{\mathbb{R}^n}=0$, for any isothermic chart on $\Sigma$.
\subsection{The Hopf differential} Let $(\Sigma,h)$ be an orientable Riemannian surface with boundary. Fix an isothermal atlas $(U_\alpha,\psi_{\alpha})$ with $z_{\alpha}\in U_{\alpha}$. \textit{A quadratic differential Q on $\Sigma$} is any collection of the form $\{\phi_{\alpha}(z_{\alpha})dz_{\alpha}\otimes dz_{\alpha}\}$, where $\phi_{\alpha}$ are complex valued functions $\phi_{\alpha}:U_{\alpha}\subset \mathbb{C}\rightarrow \mathbb{C}$ satisfying the following transformation rule under coordinate changes by transition functions $z_{\beta}=\psi_{\alpha,\beta}(z_{\alpha})$:
\begin{equation}\label{eq: defquaddifferential}
    \phi_{\beta}(z_{\beta})=\frac{1}{\psi'_{\alpha,\beta}(z_{\alpha})^2} \phi_{\alpha}(z_{\alpha}). \tag{2.1.1}
\end{equation}
Therefore a quadratic differential $Q$ is invariant under coordinate changes
\begin{equation}\label{eq: invariancequaddiff}
    \phi_{\beta}(z_{\beta})dz_{\beta}\otimes dz_{\beta}= \phi_{\alpha}(z_{\alpha})dz_{\alpha}\otimes dz_{\alpha}.\tag{2.1.2}
\end{equation}
A quadratic differential $Q$ is called \textit{holomorphic} if the corresponding functions $\phi_{\alpha}$ are holomorphic. Moreover, if the functions $\phi_{\alpha}$ are real when restricted to the boundary $\partial \Sigma$, then we say the \textit{quadratic differential Q is real on the boundary.}\\\\Let $X:(\Sigma,h) \rightarrow \mathbb{R}^3$ be a minimal immersion with $\Sigma$ orientable. The extrinsic geometry is described by the second fundamental form $II:\mathcal{X}(\Sigma) \times \mathcal{X}(\Sigma)\rightarrow \RR$:
\begin{equation}\label{eq: defsecondftalform}
    II(Y,Z)=\langle \nabla^{\mathbb{R}^3}_YZ, N_\Sigma\rangle_{\RR^3},\tag{2.1.3}
\end{equation}
where $Y,Z\in \mathcal{X}(\Sigma)$ and $N_\Sigma$ is the unit normal vector field to $\Sigma$. \textit{An umbilical point} $p\in \Sigma$, is a point where 
\begin{equation}
    II_p(v,w)=\lambda h_p(v,w),\notag
\end{equation}
 for some $\lambda \in \RR$, and for all $v,w\in T_p\Sigma$. In particular, this implies that at an umbilical point, all directions are principal with the same normal curvature. We say the immersion $X$ is \textit{totally geodesic} if the second fundamental form vanishes identically, i.e. every point of $\Sigma$ is umbilic with $\lambda=0$. In this case the immersion is contained on a plane \cite{MANFREDO}.\\\\For an isothermal atlas $(U_{\alpha},\psi_{\alpha})$ we can consider the complex valued functions $\phi_{\alpha}:U_{\alpha}\rightarrow \mathbb{C}$ given by $\phi_{\alpha}(z_{\alpha})=II(\partial_{z_{\alpha}}\psi_{\alpha},\partial_{z_{\alpha}}\psi_{\alpha})$, where $II$ was extended $\CC$-linearly. A straightforward computation shows that this collection of functions satisfies the transformation law \ref{eq: defquaddifferential}. Therefore we define \textit{the Hopf differential} as the quadratic differential $\{ II(\partial_{z_{\alpha}}\psi_{\alpha},\partial_{z_{\alpha}}\psi_{\alpha})dz_{\alpha}\otimes dz_{\alpha} \}$, or just $II(\partial_z,\partial_z)dz^2$ for short.\\\\ From the Hopf differential, we can recover the second fundamental form by taking its real part:
\begin{equation}\label{eq: Second fundamental form as real part of Hopf differential}
    II=2Re\{II(\partial_z,\partial_z)dz^2\}. \tag{2.1.4}
\end{equation}
The next following result is a well-known fact \cite{HildebrandtI} about the Hopf differential of constant mean curvature immersions, although we only state it for the minimal case. Its proof relies on the Codazzi equation for such immersions:
\begin{thm}\label{thm: Hopf diff is holomorphic}
    Let $X:(\Sigma,h)\rightarrow \mathbb{R}^3$ be a minimal immersion. Then the Hopf differential is a holomorphic quadratic differential.
\end{thm}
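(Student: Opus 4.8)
The plan is to verify the transformation law \eqref{eq: defquaddifferential} locally and then invoke the Codazzi equation to show that the component functions $\phi_\alpha$ are holomorphic, i.e.\ satisfy $\partial_{\bar z_\alpha}\phi_\alpha=0$. First I would fix an isothermal chart $(U_\alpha,\psi_\alpha)$ and, abusing notation, drop the subscript $\alpha$, writing $z=u+iv$ and identifying $X$ with $X\circ\psi$, so that the induced metric is $h=\rho^2(du^2+dv^2)$ with $\rho^2=|X_u|^2=|X_v|^2$ and $\langle X_u,X_v\rangle=0$. In this chart the second fundamental form has coefficients $e=II(\partial_u,\partial_u)=\langle X_{uu},N\rangle$, $f=II(\partial_u,\partial_v)=\langle X_{uv},N\rangle$, $g=II(\partial_v,\partial_v)=\langle X_{vv},N\rangle$, and the $\CC$-linear extension gives
\begin{equation}\label{eq: hopf coeff computation}
    \phi=II(\partial_z,\partial_z)=II\!\left(\tfrac12(\partial_u-i\partial_v),\tfrac12(\partial_u-i\partial_v)\right)=\tfrac14\bigl(e-g-2if\bigr).\notag
\end{equation}

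Next I would establish holomorphicity. The minimality hypothesis means the mean curvature $H=\tfrac{e+g}{2\rho^2}$ vanishes, so $g=-e$ and hence $\phi=\tfrac12(e-if)$. The Codazzi equations for a surface in $\RR^3$, written in isothermal coordinates, read $e_v-f_u=\rho_v(H\text{-terms})$ and $f_v-g_u=\rho_u(H\text{-terms})$; more precisely, using $H\equiv0$ they reduce to the Cauchy–Riemann system $e_v=f_u$ and $f_v=-e_u$ (this is the standard computation; one substitutes $g=-e$ and checks the connection/curvature terms cancel because the trace of $II$ is zero). These two equations are exactly the statement that $e-if$ is holomorphic in $z$, i.e.\ $\partial_{\bar z}(e-if)=0$, hence $\partial_{\bar z}\phi=0$. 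Finally, the transformation rule: if $z_\beta=\psi_{\alpha,\beta}(z_\alpha)$ is a holomorphic transition map, then $\partial_{z_\alpha}=\psi'_{\alpha,\beta}\,\partial_{z_\beta}$ as complex vector fields (by the chain rule, using holomorphicity so there is no $\partial_{\bar z_\beta}$ term), and bilinearity of $II$ gives $\phi_\alpha=II(\partial_{z_\alpha},\partial_{z_\alpha})=(\psi'_{\alpha,\beta})^2\,II(\partial_{z_\beta},\partial_{z_\beta})=(\psi'_{\alpha,\beta})^2\phi_\beta$, which is \eqref{eq: defquaddifferential}; this is the computation already asserted in the text when the Hopf differential was defined, so strictly speaking only holomorphicity remains to be shown.

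The main obstacle is the Codazzi computation itself: one has to be careful that the Codazzi equations in a general (non-unit-speed, isothermal) parametrization carry Christoffel-symbol corrections, and the cancellation that produces the clean Cauchy–Riemann system genuinely uses $H\equiv0$ (equivalently $g+e=0$ and that the relevant curvature term is the mean curvature). An alternative, perhaps cleaner, route avoids coordinates: one shows directly that $\bar\partial$ of the Hopf differential equals (a constant times) $\partial H$ as a section of the appropriate bundle — this is the classical Hopf lemma for CMC surfaces — and then minimality $H\equiv 0$ finishes it immediately. I would present the coordinate computation for concreteness but remark that it is the minimal-surface case of the general CMC statement in \cite{HildebrandtI}, so the details can be cited rather than reproduced in full.
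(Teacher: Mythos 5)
Your proposal is correct and follows exactly the route the paper indicates: the paper states this result without proof, citing \cite{HildebrandtI} and noting that ``its proof relies on the Codazzi equation,'' and your argument is precisely that standard Codazzi computation (in isothermal coordinates the right-hand sides of the Codazzi--Mainardi equations are proportional to $e+g=2\rho^2H$, so minimality yields the Cauchy--Riemann system for $e-if$), together with the transformation-law check already asserted in the text. No gaps; the computation $\phi=\tfrac14(e-g-2if)=\tfrac12(e-if)$ and the resulting identities $e_v=f_u$, $f_v=-e_u$ are all correct.
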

\noindent In the special case of free boundary minimal immersions we have an additional very important property:
\begin{thm}[{\textit{Cf. }\cite[Proof of Theorem 1]{Nitsche1985}}]\label{thm: Hopf diff is real on boundary}
    Let $X:(\Sigma,h)\rightarrow \mathbb{R}^3$ be a free boundary minimal immersion in $\mathbb{B}^3$. Then the Hopf differential is holomorphic and real on the boundary $\partial \Sigma$.
\end{thm}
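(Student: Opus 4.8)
I would split the statement into its two assertions. Holomorphicity is already given by Theorem \ref{thm: Hopf diff is holomorphic}, since a free boundary minimal immersion is in particular a minimal immersion, so all the work goes into showing that the representing functions $\phi_\alpha = II(\partial_{z_\alpha},\partial_{z_\alpha})$ are real along $\partial\Sigma$. The plan is to compute $\phi$ explicitly in a boundary isothermal chart and to read off its imaginary part. Fix a point $p\in\partial\Sigma$ and choose a half-disk isothermal chart $z=u+iv$ with $v\ge 0$ in which $\partial\Sigma$ corresponds to $\{v=0\}$ and $\partial_v$ points in the (inner) conormal direction along that set; such a chart exists and $X$ is smooth up to $\{v=0\}$ by the boundary regularity quoted after Definition \ref{def fbbmi}. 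Since $X$ is conformal and minimal it is harmonic, so $X_{uu}+X_{vv}=0$, and therefore $\partial_z\partial_z X=\tfrac14(X_{uu}-X_{vv}-2iX_{uv})=\tfrac12(X_{uu}-iX_{uv})$. Pairing with the unit normal $N_\Sigma$ and using that tangential terms drop out, $\phi=II(\partial_z,\partial_z)=\langle\partial_z\partial_z X,N_\Sigma\rangle=\tfrac12\langle X_{uu},N_\Sigma\rangle-\tfrac{i}{2}\langle X_{uv},N_\Sigma\rangle$. Hence it suffices to prove that $\langle X_{uv},N_\Sigma\rangle=0$ along $\{v=0\}$.

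The next step, which I regard as the conceptual heart of the argument, is to extract the geometry of conditions (iv)–(v) along the boundary. The outward unit normal to $\partial\BB^3$ at a point $q\in\partial\BB^3$ is the position vector $q$ itself, so condition (v) says that $X_v$ is parallel to $X$ along $\{v=0\}$; write $X_v=fX$ there for a smooth scalar function $f=f(u)$. Condition (iv) says that $\partial\Sigma$ is mapped into the sphere, so its tangent vector $X_u$ is tangent to $\partial\BB^3$, i.e.\ $\langle X,X_u\rangle=0$ on $\{v=0\}$. Since the image tangent plane of the immersion along $\partial\Sigma$ is spanned by $X_u$ and $X_v$, and $X$ is orthogonal to the first and parallel to the second, the position vector $X$ itself lies in that tangent plane; equivalently $\langle X,N_\Sigma\rangle=0$ along $\{v=0\}$.

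Finally I would differentiate the boundary identity $X_v=fX$ in the tangential direction: along $\{v=0\}$ this gives $X_{uv}=f_uX+fX_u$, so
$$\langle X_{uv},N_\Sigma\rangle=f_u\,\langle X,N_\Sigma\rangle+f\,\langle X_u,N_\Sigma\rangle=0,$$
using that $X_u$ is tangent to $\Sigma$ and the identity $\langle X,N_\Sigma\rangle=0$ just established. Thus the imaginary part of $\phi$ vanishes on $\partial\Sigma$, i.e.\ the Hopf differential is real on the boundary, completing the proof. I do not expect a genuine obstacle here: the only points requiring care are the existence of the adapted boundary isothermal chart and the smoothness of $f$ up to $\partial\Sigma$, both handled by boundary regularity. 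As an alternative route one can avoid the chart computation altogether by invoking Joachimsthal's theorem: $\partial\Sigma$ meets the totally umbilic sphere $\partial\BB^3$ orthogonally, hence is a line of curvature of $\Sigma$, and the off-diagonal entry $II(\partial_u,\partial_v)=\langle X_{uv},N_\Sigma\rangle$ of the second fundamental form in the conformal frame $\{\partial_u,\partial_v\}$ is precisely (twice) the imaginary part of $\phi$.
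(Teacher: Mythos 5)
Your argument is correct, and your main route is genuinely different from the one in the paper. The paper gets the vanishing of $\mathrm{Im}\,II(\partial_z,\partial_z)=-\tfrac12\langle X_{uv},N_\Sigma\rangle$ by a purely geometric step: since $\partial\BB^3$ is totally umbilic, Joachimsthal's theorem plus the orthogonal intersection force $\partial\Sigma$ to be a line of curvature of $\Sigma$, so the off-diagonal entry of $II$ in the boundary-adapted conformal frame vanishes --- this is exactly the ``alternative route'' you sketch at the end. Your primary argument instead extracts everything directly from conditions (iv)--(v): writing $X_v=fX$ on $\{v=0\}$ (the conormal is radial), noting $\langle X,X_u\rangle=0$ and hence $\langle X,N_\Sigma\rangle=0$ (here you implicitly use $f\neq 0$, which holds because the statement concerns genuine immersions, so $X_v\neq 0$ while $|X|=1$ on the boundary), and then differentiating $X_v=fX$ tangentially to kill $\langle X_{uv},N_\Sigma\rangle$. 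This is a self-contained first-principles computation in the spirit of Nitsche's original proof; it buys you independence from Joachimsthal and from the notion of lines of curvature, at the cost of introducing the auxiliary function $f$ and checking its smoothness (which, as you note, follows from boundary regularity, e.g.\ $f=\langle X_v,X\rangle$ there). The only cosmetic slip is the final parenthetical: $\langle X_{uv},N_\Sigma\rangle$ equals $-2\,\mathrm{Im}\,\phi$, i.e.\ minus twice the imaginary part, but this has no bearing on the conclusion. Both routes reduce the theorem to the same identity and both are valid.
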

\begin{proof}
By Theorem \ref{thm: Hopf diff is holomorphic}, it is enough to show that the Hopf differential is real on the boundary $\partial \Sigma.$ Let $z=x+iy$ be an isothermal chart in the neighborhood of a point $p\in \partial \Sigma$, in such a way that $\{y=0\}$ parametrizes the boundary points and $\{y>0\}$ parametrizes the interior points of $\Sigma$ around $p$. Thus the outer conormal vector $\nu^{\Sigma}$ is parallel to $-\partial_y$, and $\partial_x$ is tangent to the boundary $\partial \Sigma.$ The free boundary hypothesis is that the angle of intersection between the sphere $\partial \BB^3$ and $\Sigma$ along the boundary curve of intersection is constant and equal to $\frac{\pi}{2}$. Since any curve on the sphere is a line of curvature, the well-known Joachimsthal theorem \cite{MANFREDO} implies that the boundary curve $\partial \Sigma\subset \BB^3$ is also a line of curvature of $\Sigma.$ That is, $\partial_x$ is a principal direction of $\Sigma$, with principal curvature $\kappa_x:$
\begin{equation}
    \nabla^{\RR^3}_{\partial_x}N_{\Sigma}=\kappa_x \partial_x.\notag
\end{equation}
Therefore, along the boundary curve $\partial \Sigma$ near $p$:
\begin{gather}
    Im\left[ II(\partial_z,\partial_z)\right]=-\frac{1}{2}II(\partial_x,\partial_y)=-\frac{1}{2}\langle \nabla^{\RR^3}_{\partial_x}\partial_y,N_{\Sigma}\rangle_{\RR^3}\notag\\=\frac{1}{2}\langle \partial_y,\nabla^{\RR^3}_{\partial_x}N_{\Sigma}\rangle_{\RR^3}=\frac{k_x}{2}\langle \partial_y,\partial_x \rangle_{\RR^3}=0.\notag
\end{gather}
In conclusion, the Hopf differential is real on the boundary $\partial \Sigma$ and the proof is completed.
\end{proof}  
\subsection{Branch points.}
We now summarize some important properties of the behaviour of branched minimal immersions around branch points. The subset of all branch points of a branched minimal immersion $X:(\Sigma,h)\rightarrow \RR^3$ is denoted by $\Sigma_0$.
\begin{prop}[{\textit{Cf. }\cite[Proposition 3.1.2]{HildebrandtI}}]\label{prop: Branch points are discrete}
Let $X:(\Sigma,h) \rightarrow \mathbb{R}^3$ be a free boundary branched minimal immersion in $\mathbb{B}^3$. Then $\Sigma_0\subset \Sigma$ is discrete.
\end{prop}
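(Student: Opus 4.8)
The idea is to identify $\Sigma_0$ with the common zero set of the holomorphic Weierstrass data and then argue discreteness, treating the interior and the boundary separately. Concretely, fix an isothermal atlas; in a chart $z=u+iv$ set $\phi_i=\partial_z x^i$ for $i=1,2,3$. Harmonicity of $X$ means $\partial_u^2 x^i+\partial_v^2 x^i=0$, so each $\phi_i$ is holomorphic; and since $X_u=2\,\mathrm{Re}\,\phi$ and $X_v=-2\,\mathrm{Im}\,\phi$, a point $p$ is a branch point if and only if $\phi(p):=(\phi_1,\phi_2,\phi_3)(p)=0$. The vanishing locus $Z(\phi):=\bigcap_i Z(\phi_i)$ is independent of the chart, because under a transition $w=w(z)$ one has $\phi_i\mapsto\phi_i/w'$ with $w'\neq0$.

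For the interior I would argue as follows (this is the content of \cite[Proposition 3.1.2]{HildebrandtI}): since $X$ is non-constant and $\Sigma$ is connected, at least one coordinate $x^j$ is non-constant, so $\phi_j$ is a non-trivial holomorphic function on $\mathring{\Sigma}$ and thus has isolated zeros; as $\Sigma_0\cap\mathring{\Sigma}\subseteq Z(\phi_j)$, the branch points in the interior form a discrete set. One can say more and recover the usual normal form: writing $\phi=z^m\psi$ near a branch point $p$ with $\psi(p)\neq0$, conformality $\langle\phi,\phi\rangle=0$ forces $\langle\psi(p),\psi(p)\rangle=0$, hence $\psi$ is non-vanishing near $p$ and $p$ is the only branch point in a neighbourhood; but for the statement the crude argument already suffices.

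It remains to rule out branch points accumulating at a point $p\in\partial\Sigma$, and this is where the free boundary hypothesis — and nothing weaker — is needed. By the boundary regularity theory \cite{JN1975}, $X$ is smooth up to $\partial\Sigma$; and since $\Sigma$ meets the real-analytic hypersurface $\partial\mathbb{B}^3$ orthogonally along its free boundary, the reflection principle for minimal surfaces with real-analytic free boundary (going back to H.\ Lewy) lets one continue $X$ across a neighbourhood of $p$ to a branched minimal immersion of a genuine two-sided surface neighbourhood of $p$. Applying the interior argument to this continuation shows that $\Sigma_0$ is discrete near $p$ as well; since $\Sigma$ is compact, $\Sigma_0$ is in fact finite. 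The main obstacle is precisely this boundary step: because $\partial\mathbb{B}^3$ is curved, the free boundary condition does not reflect to a harmonic map under the Euclidean inversion $v\mapsto v/|v|^2$ (that inversion glues $X$ only to a $C^1$, non-minimal surface), so a genuine analytic continuation across the free boundary — equivalently, real-analyticity of $X$ up to $\partial\Sigma$ — must be invoked, or reproved by hand in isothermal half-disk coordinates $z=x+iy$ with $\{y=0\}$ parametrizing $\partial\Sigma$, using $|X|^2\equiv1$ and $\partial_\nu X\parallel X$ on $\partial\Sigma$ to extend the $\phi_i$ holomorphically across $\{y=0\}$.
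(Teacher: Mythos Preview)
Your proposal is correct and follows essentially the same route as the paper: characterize branch points as the common zeros of the holomorphic functions $\phi_i=\partial_z x^i$ (the paper obtains the same data as $f'=X_u-iX_v$ via the harmonic conjugate), and handle the boundary by analytically continuing $X$ across $\partial\Sigma$ using the free boundary condition against the real-analytic sphere (the paper cites \cite[Theorem 2.8.2']{HildebrandtII} for this step). Your version is in fact slightly cleaner in the interior, since you observe that a single non-trivial $\phi_j$ already confines $\Sigma_0$; the paper's phrasing that all three $\phi'_j$ are ``non-identically zero'' is unnecessary for the conclusion.
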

\begin{proof}
 Since $X(\Sigma)$ meets the sphere $\partial\mathbb{B}^3$ orthogonally along $\partial \Sigma$, and the sphere is analytic, it is possible to extend the map on a larger $\Tilde{\Sigma}$ uniquely \cite[Theorem 2.8.2']{HildebrandtII}. Therefore, boundary branch points of a free boundary branched minimal immersion in $\mathbb{B}^3$ can be thought as interior branch points of a branched minimal immersion in $\RR^3$. It will then be enough to show that the latter branch points are isolated.\\\\
Now we follow the standard argument in \cite{HildebrandtI}. Take a simply-connected isothermic chart $(U,z)$ of $\Sigma$. By definition, $X(u,v)=(x_1(u,v),x_2(u,v),x_3(u,v))$ is harmonic on $U$. Therefore we can find its harmonic conjugate
\begin{equation*}
    X^{*}(u,v)= (x_1^{*}((u,v),x_2^{*}((u,v),x_3^{*}((u,v)),
\end{equation*}
which satisfies the Cauchy-Riemann equations:
    \begin{gather}
        X_u=X_v^{*},\notag\\ X_v= -X_u^{*}.\notag
    \end{gather}
Hence the three coordinates $\phi_j:U\rightarrow \CC$ of the function 
\begin{equation*}
    f(z)=(\phi_1,\phi_2,\phi_3)=X(z)+iX^{*}(z),
\end{equation*}
are holomorphic. Using the Cauchy-Riemann equations, we obtain
\begin{equation}
    f'(z)=(\phi'_1,\phi'_2,\phi'_3)=X_u-iX_v.\notag
\end{equation}
Extending the Euclidean inner product in a $\mathbb{C}$-linear way, and using conformality, we have
\begin{equation}
    |\phi'_1|^2+|\phi'_2|^2+|\phi'_3|^2=\langle f',\overline{f'} \rangle=2|X_u|^2_{\mathbb{R}^3}.\notag
\end{equation}
Thus, the branch points $p\in \Sigma_0\cap U$ are characterized as the simultaneous zeros of the three holomorphic non-identically zero functions $\phi'_j$ and the conclusion follows.
\end{proof}
\noindent The ideas discussed in the proof of the previous theorem allow us to show the following asymptotic expansion around the branch points:
\begin{prop}[{\cite[Proposition 3.2.1]{HildebrandtI}}]\label{prop: expansion of Xz around branch points}
    Let $X:(\Sigma,h) \rightarrow \mathbb{R}^3$ be a free boundary branched minimal immersion in $\mathbb{B}^3$. For any branch point $p\in \Sigma_0$, either on the boundary or on the interior, and for each isothermic chart $(U,z)$ centered at $p$, there exist $r>0$, an integer number $\nu \geq 1$ and a complex vector $A\in \mathbb{C}^3$ with $A\neq 0$ and $\langle A,A \rangle=0$ such that, for all $z\in B_r(0)\cap U$,
\begin{equation}\label{eq: expansion Xz around branch points}
X_z(z)=Az^{\nu}+O\left(|z|^{\nu+1}\right). \tag{2.2.1}
\end{equation}
\end{prop}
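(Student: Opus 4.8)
The plan is to follow the argument sketched in the proof of Proposition~\ref{prop: Branch points are discrete}, and refine it to extract the leading-order behaviour of $X_z$ at a branch point. First I would reduce to the interior case: by the reflection argument already used in the previous proposition (analytic boundary, orthogonal intersection, and \cite[Theorem 2.8.2']{HildebrandtII}), a boundary branch point becomes an interior branch point of a branched minimal immersion on a slightly larger surface $\widetilde\Sigma$, so it suffices to treat interior branch points. Fix a simply-connected isothermic chart $(U,z)$ centred at $p$, and form the holomorphic curve $f(z)=X(z)+iX^{*}(z)=(\phi_1,\phi_2,\phi_3)$ with $f'(z)=X_u-iX_v=2X_z$, where each $\phi_j'$ is holomorphic on $U$.

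The key step is to analyze the order of vanishing of the $\phi_j'$ at $z=0$. Since $p$ is a branch point, all three $\phi_j'$ vanish at $0$; since $X$ is non-constant, by Proposition~\ref{prop: Branch points are discrete} the $\phi_j'$ are not identically zero, so each has a finite order of vanishing. Let $\nu\ge 1$ be the minimum of these orders. Then I can write $\phi_j'(z)=a_j z^{\nu}+O(|z|^{\nu+1})$ on some disk $B_r(0)$, where $A:=(a_1,a_2,a_3)\in\CC^3$ is nonzero by the choice of $\nu$ as the \emph{minimum}. Dividing by $2$ and relabelling absorbs the harmless factor, giving
\begin{equation*}
    X_z(z)=\tfrac12 f'(z)=\tfrac12\big(Az^{\nu}+O(|z|^{\nu+1})\big),
\end{equation*}
which after renaming $A$ is exactly \eqref{eq: expansion Xz around branch points}.

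It remains to verify $\langle A,A\rangle=0$ for the $\CC$-bilinear extension of the Euclidean inner product. This comes from the conformality (almost conformality) of $X$: the identity $\langle f',f'\rangle = |X_u|^2 - |X_v|^2 - 2i\langle X_u,X_v\rangle = 0$ holds pointwise on $U$ because $|X_u|=|X_v|$ and $\langle X_u,X_v\rangle=0$. Hence the holomorphic function $\langle f'(z),f'(z)\rangle = \sum_j \phi_j'(z)^2$ vanishes identically. Substituting the expansion $f'(z)=Az^{\nu}+O(|z|^{\nu+1})$, the lowest-order term is $\langle A,A\rangle z^{2\nu}$, which must therefore vanish, i.e.\ $\langle A,A\rangle = 0$. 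I expect the only mildly delicate point to be bookkeeping: making sure that after reducing to the interior case the chart is genuinely centred at the (now interior) point and the remainder estimate $O(|z|^{\nu+1})$ holds uniformly on a fixed disk, which follows from holomorphy of the $\phi_j'$ and Taylor's theorem; the conformality identity and the choice of $\nu$ as the minimal vanishing order are the substantive ingredients.
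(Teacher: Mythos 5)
Your argument is correct and is essentially the standard proof of this statement: the paper does not prove it but cites \cite[Proposition 3.2.1]{HildebrandtI}, whose argument is exactly the one you give (holomorphic curve $f'=2X_z$, minimal vanishing order $\nu$, and $\langle A,A\rangle=0$ from the identically vanishing holomorphic function $\sum_j(\phi_j')^2$), and your reduction of boundary branch points to interior ones via analytic extension is the same device the paper uses in the proof of Proposition \ref{prop: Branch points are discrete}. The only cosmetic caveat is that some $\phi_j'$ could vanish identically (if the image lies locally in a coordinate plane), in which case $\nu$ is the minimum over the non-trivial components and the corresponding $a_j=0$; this does not affect $A\neq 0$ or the rest of the argument.
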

\noindent The next proposition asserts that the Hopf differential can be extended holomorphically to the branch points:
\begin{prop}[\textit{Cf. }{\cite[Proposition 3.2.1]{HildebrandtI}}]\label{prop: Hopf diff extends to branch points}
    Let $X:(\Sigma,h) \rightarrow \mathbb{R}^3$ be a free boundary branched minimal immersion in $\BB^3$. Then the Hopf differential extends holomorphically to the branch points. Moreover, the extension is exactly zero on the branch points.
\end{prop}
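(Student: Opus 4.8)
The plan is to work locally around a branch point $p \in \Sigma_0$ using the asymptotic expansion provided by Proposition \ref{prop: expansion of Xz around branch points}, and to express the Hopf differential in terms of the holomorphic data $f(z) = X(z) + iX^*(z)$ introduced in the proof of Proposition \ref{prop: Branch points are discrete}. First I would recall that for a minimal immersion, the Hopf differential coefficient $II(\partial_z,\partial_z)$ can be written purely in terms of $X_z$ and $X_{zz}$: since $II(Y,Z) = \langle \nabla^{\RR^3}_Y Z, N_\Sigma\rangle$ and $\langle X_z, X_z\rangle = 0$ by conformality, differentiating yields $\langle X_{zz}, X_z\rangle = 0$, and one obtains that $\phi(z) := II(\partial_z \psi, \partial_z \psi)$ equals (up to a standard normalization) a multiple of $\langle X_{zz}, N_\Sigma\rangle$, which away from branch points can be rewritten using $N_\Sigma \parallel X_u \times X_v$ so that $\phi(z)$ is essentially the coefficient $\langle f''(z), \overline{f'(z)}\rangle / |X_u|^2$-type expression; more precisely, the classical formula gives $\phi(z) = \langle X_{zz}, N_\Sigma \rangle$ and one checks $\langle X_{zz}, X_z\rangle = 0 = \langle X_{zz}, X_{\bar z}\rangle$ fails in general — rather $\langle X_{zz}, X_{\bar z}\rangle$ controls the conformal factor — so the genuinely useful fact is that $\phi$ is holomorphic on $\mathring\Sigma \setminus \Sigma_0$ (Theorem \ref{thm: Hopf diff is holomorphic}).

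The key step is the growth estimate near $p$. Using \eqref{eq: expansion Xz around branch points}, $X_z(z) = A z^\nu + O(|z|^{\nu+1})$ with $\langle A, A\rangle = 0$, $A \ne 0$, I would differentiate to get $X_{zz}(z) = \nu A z^{\nu-1} + O(|z|^\nu)$ (the expansion can be differentiated since the $\phi_j$ are holomorphic, by Cauchy's estimates applied to the remainder). The conformal factor is $|X_u|^2_{\RR^3} = 2|X_z|^2 = 2|A|^2 |z|^{2\nu} + O(|z|^{2\nu+1})$, and the unit normal $N_\Sigma = (X_u \times X_v)/|X_u \times X_v|$ is bounded. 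Then $\phi(z) = \langle X_{zz}, N_\Sigma\rangle$ satisfies $|\phi(z)| = O(|z|^{\nu - 1})$, which for $\nu \ge 1$ shows $\phi$ is bounded near $p$ (in fact $O(|z|^{\nu-1})$). A holomorphic function on the punctured disk $B_r(0)\setminus\{0\}$ that is bounded extends holomorphically across $0$ by the Riemann removable singularity theorem. This gives the holomorphic extension. For the "zero at the branch point" claim: when $\nu \ge 2$ the estimate $|\phi(z)| = O(|z|^{\nu-1}) = O(|z|)$ forces $\phi(0) = 0$ directly; when $\nu = 1$ one must argue more carefully, computing the leading coefficient $\langle X_{zz}, N_\Sigma\rangle|_{z=0}$. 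Here $X_{zz}(0) = A$ (leading term with $\nu = 1$) and $N_\Sigma(0) = \lim (X_u \times X_v)/|X_u\times X_v|$; writing $A = a - ib$ with $a = X_u(0)/\ldots$, one finds from $\langle A,A\rangle = 0$ that $|a|^2 = |b|^2$ and $\langle a, b\rangle = 0$, so $a, b$ span the tangent plane and $N_\Sigma(0) = (a\times b)/|a\times b|$ is orthogonal to both; hence $\langle A, N_\Sigma(0)\rangle = \langle a - ib, N_\Sigma(0)\rangle = 0$. Thus $\phi(0) = 0$ in all cases.

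The main obstacle I anticipate is making the $\nu = 1$ case rigorous: one has to pin down exactly which vector appears as the leading coefficient of $X_{zz}$ and confirm it lies in the tangent plane at $p$, so that its inner product with the (well-defined, by Proposition \ref{prop: expansion of Xz around branch points}'s setup) limiting normal vanishes. The cleanest route is probably to observe that for \emph{any} $z$ near $p$ one has $\langle X_{zz}, X_z\rangle = \tfrac12 \partial_z\langle X_z, X_z\rangle = 0$ and $\langle X_{zz}, X_{\bar z}\rangle = \partial_z |X_z|^2$ need not vanish — but the component of $X_{zz}$ normal to the surface is precisely $\phi \cdot N_\Sigma$, and from the expansion $X_{zz} = \nu A z^{\nu - 1} + \ldots$ together with $X_z = A z^\nu + \ldots$ and $X_{\bar z} = \overline{A z^\nu} + \ldots = \bar A \bar z^\nu + \ldots$ one can extract $\phi(z) = \langle X_{zz}, N_\Sigma\rangle$ and, after dividing by the known powers of $|z|$, pass to the limit. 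Since $A$, the limit tangent directions, and the limit normal all sit in a fixed frame at $p$ with $\langle A, A\rangle = 0$ forcing $A$ into the complexified tangent plane, the pairing with $N_\Sigma$ dies. Alternatively — and more in the spirit of the paper — one simply invokes that $\phi$ extends holomorphically and that $II = 2\,\mathrm{Re}(\phi\, dz^2)$ together with the local structure of the immersion near a branch point (where the surface is $C^1$-tangent to a plane to leading order) forces the extended second fundamental form, hence $\phi$, to vanish at $p$.
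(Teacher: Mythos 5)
Your proposal is correct and follows essentially the same route as the paper: expand $X_z = Az^{\nu} + O(|z|^{\nu+1})$, differentiate to get $X_{zz} = \nu A z^{\nu-1} + O(|z|^{\nu})$, show the unit normal has a limit $(\alpha\wedge\beta)/|\alpha\wedge\beta|$ at the branch point (where $A = \frac{\alpha - i\beta}{2}$), invoke Riemann's removable singularity theorem, and in the case $\nu=1$ observe that $\langle A, \alpha\wedge\beta\rangle = 0$ because $\alpha,\beta$ are orthogonal to $\alpha\wedge\beta$. The step you flagged as the main obstacle is handled in the paper exactly as you resolve it.
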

\begin{proof}
 We reproduce the proof of \cite{HildebrandtI} in order to show that the Hopf differential extends to zero on the branch points. Let $p\in \Sigma_0$ be a branch point, and consider an isothermal chart $(U,z)$ centered at $p$. From Proposition \ref{prop: expansion of Xz around branch points}, there exists $r>0$ such that \ref{eq: expansion Xz around branch points} holds on $B_r(0)\cap U$. If we write $A=\frac{\alpha-i\beta}{2}$ for $\alpha,\ \beta\in \mathbb{R}^3$, the conditions $A\neq 0$ and $\langle A,A\rangle = 0$ are equivalent to $|\alpha|^2=|\beta|^2\neq 0$ and $\langle \alpha,\beta \rangle_{\mathbb{R}^3}=0$.\\\\Since $X_z=\frac{X_u-iX_v}{2}$, we deduce that
\begin{gather}
    X_u(z)=Re(z^\nu) \alpha +Im(z^\nu) \beta +O(|z|^{\nu+1}) ,\notag\\
    X_v(z)=-Im(z^\nu)\alpha +Re(z^\nu) \beta +O(|z|^{\nu+1}),\notag
\end{gather}
for all  $z\in B_r(0)\cap U$. Therefore
\begin{equation}
    X_u \wedge X_v (z)= |z|^{2\nu}(\alpha \wedge \beta)+O(|z|^{2\nu+1}),\notag
\end{equation}
for all  $z\in B_r(0)\cap U$. Finally, when $z$ approaches $0$ we obtain:
\begin{equation}\label{eq: limit normal at branch}
    \lim_{z\rightarrow 0}N_{\Sigma}(z)=\lim_{z\rightarrow 0}\frac{X_u \wedge X_v (z)}{|X_u \wedge X_v (z)|}=\frac{\alpha \wedge \beta}{|\alpha \wedge \beta|}.\tag{2.2.2}
\end{equation}
Hence the function $II(\partial_z,\partial_z)=\langle X_{zz}(z),N_{\Sigma}(z)\rangle_{{\mathbb{R}^3}}$ is continuous at $z=0$, and therefore by the Riemann removable singularity theorem \cite[Theorem 3.1]{Stein}, it extends holomorphically to $z=0$. In conclusion, the Hopf differential extends holomorphically to the branch points.\\\\ Now we show the extension is exactly zero at the branch points. When the order of the branch point is strictly greater than one:
\begin{equation}
    X_{zz}(z)=\nu Az^{\nu-1}+O(|z|^\nu), \notag
\end{equation}
for all  $z\in B_r(0)\cap U$ and so $\lim_{z\rightarrow 0}\langle X_{zz}(z),N_{\Sigma}(z)\rangle_{{\mathbb{R}^3}}=0.$ If the order of the branch point is exactly one, then:
\begin{gather*}
    \lim_{z\rightarrow 0}\langle X_{zz}(z),N_{\Sigma}(z)\rangle_{{\mathbb{R}^3}}=\frac{1}{|\alpha\wedge \beta|}\langle A,\alpha\wedge \beta\rangle_{{\mathbb{R}^3}}\\=\frac{1}{2|\alpha\wedge \beta|}\langle \alpha,\alpha\wedge \beta\rangle_{{\mathbb{R}^3}}-\frac{i}{2|\alpha\wedge \beta|}\langle \beta,\alpha\wedge \beta\rangle_{{\mathbb{R}^3}}=0.
\end{gather*}
So in any case we check that the Hopf differential extends to zero across the branch points.
\end{proof}
\subsection{The Enneper-Weierstrass Representation.}

The Weierstrass representation formula is a useful tool to construct branched minimal immersions in the Euclidean space $\mathbb{R}^3$.
\begin{thm}[{\cite[Theorem 3.3.1]{HildebrandtI}}]\label{thm: Weierstrass rep}
    Let $\Omega\subset \CC$ be a simply connected domain. For every non-planar branched minimal immersion $X:\Omega\rightarrow \RR^3$, there exist a non-zero holomorphic function $f$ on $\Omega$ and a non-zero meromorphic function $g$ on $\Omega$ such that $fg^2$ is holomorphic on $\Omega$ and 
\begin{equation}\label{eq: formula Weierstrass}
X(z)=X(z_0)+Re\int_{z_0}^z\left[\frac{1}{2}f(1-g^2), \frac{i}{2}f(1+g^2),fg\right]d\zeta,\tag{2.3.1} 
\end{equation}
for all $z,z_0$ in $\Omega$. Conversely, two functions $f$ and $g$ on $\Omega$ as above define, by means of \ref{eq: formula Weierstrass}, a branched minimal immersion $X:\Omega\rightarrow \RR^3$.
\end{thm}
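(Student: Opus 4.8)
The plan is to exploit the classical correspondence, on a simply connected domain, between harmonic maps $X=(x_1,x_2,x_3)\colon\Omega\to\RR^3$ and triples $\Phi=(\phi_1,\phi_2,\phi_3)$ of holomorphic functions given by $\phi_j:=2\,\partial x_j/\partial z$, and then to repackage the triple $\Phi$ as the pair $(f,g)$.

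\emph{Forward direction.} Since $X$ is harmonic, each $\phi_j=2\,\partial x_j/\partial z$ is holomorphic on $\Omega$, and since $dx_j=\mathrm{Re}(\phi_j\,dz)$ one recovers $X(z)=X(z_0)+\mathrm{Re}\int_{z_0}^z(\phi_1,\phi_2,\phi_3)\,d\zeta$, the integral being unambiguous on the simply connected $\Omega$ by Cauchy's theorem. The almost conformality of $X$ is equivalent to $\phi_1^2+\phi_2^2+\phi_3^2\equiv 0$, because $\sum_j\phi_j^2=(|X_u|^2_{\RR^3}-|X_v|^2_{\RR^3})-2i\langle X_u,X_v\rangle_{\RR^3}$. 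Next I would note that the hypothesis that $X$ is non-planar forces both $\phi_3\not\equiv 0$ and $\phi_1-i\phi_2\not\equiv 0$: if $\phi_3\equiv 0$ then $x_3$ is constant and $X(\Omega)$ lies in an affine plane, while if $\phi_1-i\phi_2\equiv 0$ then the factorization $(\phi_1-i\phi_2)(\phi_1+i\phi_2)=-\phi_3^2$ gives $\phi_3\equiv 0$ too, again the planar case. Hence $f:=\phi_1-i\phi_2$ is holomorphic and not identically zero, so it has isolated zeros, and $g:=\phi_3/f$ is a well-defined meromorphic function, not identically zero. The identity $fg^2=\phi_3^2/(\phi_1-i\phi_2)=-(\phi_1+i\phi_2)$ shows $fg^2$ is holomorphic, and a short computation gives $\tfrac{1}{2}f(1-g^2)=\phi_1$, $\tfrac{i}{2}f(1+g^2)=\phi_2$, $fg=\phi_3$; substituting into the integral formula for $X$ yields \ref{eq: formula Weierstrass}.

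\emph{Converse.} Given $f$ holomorphic, $g$ meromorphic with $fg^2$ holomorphic, set $\Phi=(\tfrac{1}{2}f(1-g^2),\tfrac{i}{2}f(1+g^2),fg)$. I would first check that each $\phi_j$ is holomorphic on $\Omega$: away from the poles of $g$ this is clear, and at a pole of order $k$ the holomorphy of $fg^2$ forces $f$ to vanish there to order at least $2k$, so that $fg$, and hence every $\phi_j$ (a constant-coefficient combination of $f$, $fg$ and $fg^2$), extends holomorphically across it. The algebraic identity $\phi_1^2+\phi_2^2+\phi_3^2=-f^2g^2+f^2g^2=0$ holds. Since $\Omega$ is simply connected, $\int_{z_0}^z\Phi\,d\zeta$ is single-valued, so $X(z):=X(z_0)+\mathrm{Re}\int_{z_0}^z\Phi\,d\zeta$ defines a $C^\infty$ map; its components are real parts of holomorphic functions, hence harmonic; $\sum_j\phi_j^2=0$ says precisely that $X$ is almost conformal; and $X$ is non-constant because $f\not\equiv 0$ forces $\Phi\not\equiv 0$. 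Thus $X$ is a branched minimal immersion $\Omega\to\RR^3$.

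The only genuinely delicate point in the argument is the non-degeneracy used in the forward direction — guaranteeing that one is not dividing by a function that vanishes identically — and this is exactly where the non-planarity hypothesis is needed; controlling the order of vanishing of $f$ at the poles of $g$ in the converse is the only other step that requires a moment's attention. Everything else is routine bookkeeping with the Cauchy–Riemann equations and the relation $\sum_j\phi_j^2=0$.
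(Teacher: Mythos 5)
Your proposal is correct and is essentially the classical argument: the paper itself states this result as a quotation from Dierkes--Hildebrandt et al.\ without reproducing a proof, and the standard proof in that reference proceeds exactly as you do, via the holomorphic triple $\phi_j=2\,\partial x_j/\partial z$, the conformality identity $\sum_j\phi_j^2=0$, and the substitution $f=\phi_1-i\phi_2$, $g=\phi_3/(\phi_1-i\phi_2)$. You correctly isolate the two delicate points --- non-planarity guaranteeing $\phi_1-i\phi_2\not\equiv 0$ in the forward direction, and the order of vanishing of $f$ at the poles of $g$ in the converse --- so nothing is missing.
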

\noindent Let $X:(\Sigma,h) \rightarrow \mathbb{R}^3$ be a branched minimal immersion in $\RR^3$. Locally in a simply connected isothermic chart $(U,\psi)$, according with the previous theorem, there exists holomorphic data $(g,f)$, representing $X$ as in equation \ref{eq: formula Weierstrass}. The function $g$ describes the Gauss map $N_{\Sigma}$ of the immersion, by means of the stereographic projection $pr:\mathbb{S}^2 \setminus \{N\} \rightarrow \mathbb{C}$, where $N=(0,0,1)$ is the north pole of the sphere i.e. for all $z\in U$,
 \begin{equation*}
     N_\Sigma(X(z))=pr^{-1}\circ g(z).
 \end{equation*}
 All the geometry of the immersion, that is, both the first and second fundamental forms can be calculated in terms of the Weierstrass holomorphic data. The conformal factor can be calculated according to the following equation
\begin{equation}\label{eq: conformal factor}
    \left(X\circ \psi\right)^{*}(can_{\RR^3})=\frac{1}{4}\left(1+|g|^2\right)^2|f|^2|dz|^2. \tag{2.3.2}
\end{equation}
Therefore, the branch points of $X$ are precisely the simultaneous zeros of $f$ and $fg^2$. As for the second fundamental form $II$, it is enough to provide the equation for the Hopf differential
\begin{equation}\label{eq: Hopf diff with holomorphic data}
    II(\partial_z,\partial_z)dz^2=-\frac{g'(z)f(z)}{2}dz^2, \tag{2.3.3}
\end{equation}
because of \ref{eq: Second fundamental form as real part of Hopf differential}. See \cite[Section 3.3]{HildebrandtI} for a deduction of equations \ref{eq: conformal factor} and \ref{eq: Hopf diff with holomorphic data}.
\subsection{Examples} We describe the associated holomorphic data of the Henneberg surface \cite{Henneberg} and of the minimal open Möbius band in $\mathbb{R}^3$ of Meeks \cite{Meeks}. While the former has branch points, the latter is free of branch points. We calculate the first fundamental form and the Hopf quadratic differential for both examples and determine the location of the zeros of the Hopf differential.\\\\ The Weierstrass representation for non-orientable minimal surfaces covered by a planar domain, which is the case of the Möbius band, can be defined precisely through the Weierstrass representation on the orientable double cover. In both examples, this double cover is the punctured plane $\CC^{*}=\mathbb{C}\setminus \{0\}.$ Although the domain is not simply connected, there is a global version of Theorem \ref{thm: Weierstrass rep} for non-simply connected Riemann surfaces \cite[Theorem 1.16]{Barbosa}, where it is enough to require that the holomorphic data satisfies a period condition, as both examples do. The deck transformation is the anti-holomorphic involution without fixed points $T:\CC^{*}\rightarrow \CC^{*}$ given by $T(z)=-\frac{1}{\overline{z}}$. The open Möbius band is then obtained as the quotient of $\CC^{*}$ by the $\mathbb{Z}_2$ action generated by $T$.
\begin{ex}[{Henneberg's minimal surface}]\label{Henneberg's minimal surface}
\normalfont{The holomorphic data for the Henneberg surface is:
\begin{gather}
   f(z)=\frac{z^4-1}{z^4},\notag\\ g(z)=z.\notag
\end{gather}
Notice that $f$ and $g$ satisfy the required condition of Theorem \ref{thm: Weierstrass rep}. Hence, we can construct, through the Enneper-Weierstrass representation, the minimal immersion  $X:\CC^{*}\rightarrow \mathbb{R}^3$ as in equation \ref{eq: formula Weierstrass}
\begin{equation*}
    X(z)=Re \left[\frac{\left(1-z^2\right)^3}{6z^3},i\frac{\left(1+z^2\right)^3}{6z^3}, \frac{\left(1-z^2\right)^2}{2z^2}\right],
\end{equation*}
where we set $X(1)=0$. It is readily verified that $X(z)=X(-\frac{1}{\Bar{z}})$ and therefore it descends to a well-defined map from an open Möbius band into the Euclidean space $\RR^3$.\\\\
We can calculate the first fundamental form using equation \ref{eq: conformal factor}:
\begin{equation}\label{eq: conformal factor Henneberg}
    ds^2=\frac{(1+|z|^2)^2|z^4-1|^2}{4|z|^8}|dz|^2. \tag{2.4.1}
\end{equation}
The conformal factor vanishes at the points $z=\pm 1$ and $z=\pm i$. Notice that both $z=1$ and $z=-1$, on the orientable double cover, project to the same point on the Möbius band, and the same can be said about $z=\pm i$. Therefore the Henneberg minimal open Möbius band has only two branch points.
Using equation \ref{eq: Hopf diff with holomorphic data} we obtain the Hopf quadratic differential for Henneberg's example
\begin{equation}\label{Hopf differential Henneberg}
     II(\partial_z,\partial_z)dz^2=-\frac{z^4-1}{2z^4}dz^2. \tag{2.4.2}
\end{equation}\\
We observe that the Hopf differential vanishes exactly at the branch points, as it should be according to Proposition \ref{prop: Hopf diff extends to branch points}}.
\end{ex}
\begin{ex}[{Meek's minimal open Möbius band}]\label{Meeks example}
     \normalfont{The holomorphic data for this case is:
\begin{gather}
    f(z)=2i\frac{(z-1)^2}{z^4},\notag\\ g(z)=\frac{z^2(z+1)}{z-1}\notag. 
\end{gather}
Since the meromorphic function $g$ has a simple pole at $z=1$, which is a zero of the holomorphic function $f$ of order two, the function $fg^2$ is holomorphic on $\CC^{*}$. The three coordinate functions of the minimal immersion  $X=(X_1,X_2,X_3)$ associated to this holomorphic data are calculated by means of equation \ref{eq: formula Weierstrass}
\begin{gather*}
X_1(z)=Re\left[-\frac{i}{3}\left(z+\frac{1}{z}\right)^3-i\left(z^2-\frac{1}{z^2}\right)\right],\\X_2(z)=Re\left[-\frac{1}{3}\left(z^3-\frac{1}{z^3}\right)-\left(z-\frac{1}{z}\right)^2-\left(z-\frac{1}{z}\right)\right],\\X_3(z)=Re\left[2i\frac{1+z^2}{z^2}\right],
\end{gather*}
where we set $X(1)=0$. From these equations it can be checked that $X$ is invariant by the deck transformation, $X(z)=X(-\frac{1}{\overline{z}})$. It follows that the immersion $X$ induces a well-defined minimal immersion of an open Möbius band into $\mathbb{R}^3$. A straightforward computation using \ref{eq: conformal factor} gives
\begin{equation}\label{eq: conformal factor Meeks}
    ds^2=\frac{(|z-1|^2+|z|^4|z+1|^2)^2}{|z|^8}|dz|^2.
    \tag{2.4.3}
\end{equation}
Since the conformal factor never vanishes on $\CC^{*}$, the immersion is free of branch points. On the other hand, using equation \ref{eq: Hopf diff with holomorphic data} we obtain the Hopf quadratic differential
\begin{equation}\label{Hopf diff Meeks}
     II(\partial_z,\partial_z)dz^2=-2i\frac{z^2-z-1}{z^3}dz^2.
     \tag{2.4.4}
\end{equation}
The vanishing points of the Hopf differential on the double cover are $z=\frac{1\pm \sqrt{5}}{2}.$ Since these two points project to the same point on the Möbius band, we conclude it has exactly one umbilical point.}
\end{ex}

\section{Free boundary minimal annuli}\label{Section 3}
\subsection{Uniformization of annuli} In order to investigate the existence of all free boundary branched minimal immersions of Möbius bands in $\mathbb{B}^3$, it is useful to have a canonical way to describe its oriented double cover with the induced covering metric. For that, let us consider the annulus on the complex plane with its complex structure induced by the canonical Euclidean metric
\begin{equation}
    A_{a,b}=\{z\in \mathbb{C}|\ 0<a\leq |z| \leq b<+\infty \}\subset \mathbb{C}. \notag
\end{equation}
This annulus is always conformally equivalent to $A_{\frac{1}{R},R}$ with $R=\sqrt{\frac{b}{a}}$, because homothety preserves the conformal class. In fact, the moduli space of an annulus is one dimensional:
\begin{thm}[{\cite[Proposition 3.2.1]{Hubbard}}]\label{thm: uniformization of annuli}
Let $(N,h)$ be Riemannian surface diffeomorphic to the cylinder $\mathbb{S}^1\times [0,1]$. Then there exist $R>0$ and a conformal diffeomorphism $\phi:(N,h)\rightarrow (A_{\frac{1}{R},R},can)$.
\end{thm}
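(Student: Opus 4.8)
The plan is to produce the conformal diffeomorphism $\phi$ directly from the \emph{harmonic measure} of one boundary circle. First I would invoke the isothermic atlas (Chern \cite{CHERN1955}) to regard $(N,h)$ as a compact Riemann surface with exactly two boundary circles $\Gamma_0$ and $\Gamma_1$. Solving the Dirichlet problem, let $u\in C^{\infty}(N)$ be the harmonic function with $u|_{\Gamma_0}=0$ and $u|_{\Gamma_1}=1$. By the maximum principle $0<u<1$ on $\mathring N$, and by the Hopf boundary point lemma $\partial u/\partial\nu$ has a definite (nonzero) sign along each $\Gamma_i$.

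Next, since $u$ is harmonic the $1$-form ${*}du$ is closed, hence determines a class in $H^{1}_{\mathrm{dR}}(N)\cong\RR$; set $T=\oint_{\gamma}{*}du$ for $\gamma$ a generator of $H_{1}(N;\mathbb{Z})$, for instance $\gamma=\Gamma_0$. Stokes' theorem gives $\oint_{\Gamma_1}\partial u/\partial\nu\,ds=\oint_{\Gamma_0}\partial u/\partial\nu\,ds$, and the Hopf lemma yields $T\neq 0$; after possibly reversing orientation assume $T>0$. Let $v$ be the (multivalued) primitive of ${*}du$, well defined modulo $T\mathbb{Z}$, so that $w:=u+iv$ is a single-valued holomorphic map $w\colon N\to C:=\bigl([0,1]\times\RR\bigr)/(T\mathbb{Z})$ onto the flat cylinder of modulus $T$, with $w(\partial N)\subset\partial C$ and $w(\mathring N)\subset\mathring C$ by the maximum principle.

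The heart of the argument is that $w$ is a biholomorphism. Since $N$ is compact and $w$ maps boundary to boundary and interior to interior, $w$ is proper (extend it across $\partial N$ by Schwarz reflection to a holomorphic map of the doubled surfaces if one wishes to stay on closed Riemann surfaces), hence a branched covering of some degree $d\geq 1$. The restriction $w|_{\Gamma_0}\colon\Gamma_0\to\{0\}\times\RR/T\mathbb{Z}$ has winding number $\tfrac{1}{T}\oint_{\Gamma_0}dv=\tfrac{1}{T}\oint_{\Gamma_0}{*}du=1$, so $d=1$; but a degree-one proper holomorphic map has no multiple points, so $w$ is injective and therefore a conformal diffeomorphism onto $C$. (Alternatively one may double $(N,h)$ across $\partial N$ to obtain a torus equipped with an orientation-reversing involution and invoke the classification of such involutions, but the harmonic-measure route is more self-contained.)

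Finally I would normalize. The flat cylinder $C$ is conformally equivalent to the round annulus $\{1\le|\zeta|\le e^{2\pi/T}\}$ via $w\mapsto e^{2\pi w/T}$, and composing with the homothety $\zeta\mapsto e^{-\pi/T}\zeta$ identifies it with $A_{1/R,R}$ for $R=e^{\pi/T}$. Composing $w$ with these two maps gives the desired conformal diffeomorphism $\phi$, and $R$ (equivalently $T$) is its conformal modulus. The only genuinely delicate point is the nonvanishing of $dw=2u_{z}\,dz$, i.e.\ that $u$ has no interior critical points; I have arranged for this to fall out of the degree computation above rather than proving it by a direct Morse-theoretic argument or an argument-principle count of the zeros of $u_z$ on the level curves of $u$, though either of those would work just as well.
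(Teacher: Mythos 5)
The paper does not actually prove this statement: it is quoted verbatim from Hubbard's book (Proposition 3.2.1 there) and used as a black box, so there is no internal argument to compare yours against. Your harmonic-measure proof is correct and is the classical self-contained route to the modulus of an annulus: the Dirichlet solution $u$ with boundary values $0$ and $1$, its conjugate period $T=\oint_{\Gamma_0}{*}du$, the holomorphic map $w=u+iv$ onto the flat cylinder $[0,1]\times(\RR/T\mathbb{Z})$, the degree count on the boundary forcing $d=1$ hence injectivity and nonvanishing of $dw$, and finally the exponential $w\mapsto e^{2\pi w/T}$ followed by a homothety to land on $A_{1/R,R}$ with $R=e^{\pi/T}$. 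All the load-bearing steps are sound: properness is automatic since $N$ is compact, the degree of the proper boundary-preserving map equals the winding number of $w|_{\Gamma_0}$, which is $1$ by the very definition of $T$, and degree one for a holomorphic map excludes both branch points and multiple preimages. One small slip worth fixing: with the \emph{outward} conormal on both boundary circles, the divergence theorem gives $\oint_{\Gamma_0}\partial u/\partial\nu\,ds+\oint_{\Gamma_1}\partial u/\partial\nu\,ds=0$, so the two integrals are negatives of one another, not equal; this is only a sign convention and does not affect the conclusion $T\neq0$, which already follows from the Hopf lemma applied along $\Gamma_0$ alone. Compared with simply citing Hubbard, your argument has the advantage of exhibiting the conformal modulus explicitly as $R=e^{\pi/T}$ in terms of the capacity of the condenser $(\Gamma_0,\Gamma_1)$, which is more than the paper needs but is a perfectly legitimate, and arguably more informative, substitute for the citation.
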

\noindent Also, we need the classification of the automorphisms of the annulus $(A_{a,b},can)$, that is, the group of all biholomorphisms from the annulus to itself:
\begin{prop}[{\cite[Example 6.1.13]{BasicComplex}}]\label{prop: automorphism group annulus}
    Let $\phi: (A_{a,b},can)\rightarrow (A_{a,b},can)$  be an automorphism. Then, there exists $\theta_0\in \mathbb{R}$ such that, for all $z\in A_{a,b}$, either $\phi(z)=e^{i\theta_0}z$ or $\phi(z)=e^{i\theta_0} \frac{ab}{z}$.
\end{prop}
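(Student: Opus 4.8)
The plan is to reduce the whole statement to the behaviour of the modulus $|\phi|$, which is governed by a Dirichlet problem on the annulus. First I would record the boundary behaviour of $\phi$. An automorphism of $(A_{a,b},can)$ is in particular a homeomorphism of the closed annulus which is holomorphic on $\mathring{A}_{a,b}$, so it maps $\partial A_{a,b}$ onto itself. Since $\partial A_{a,b}$ has exactly two connected components, the circles $C_a=\{|z|=a\}$ and $C_b=\{|z|=b\}$, the map $\phi$ either preserves each of them or interchanges them. (That the interchanging case actually occurs is witnessed by the holomorphic involution $\iota(z)=ab/z$, which is itself an automorphism of $A_{a,b}$ swapping $C_a$ and $C_b$.) If one only assumes $\phi$ to be a biholomorphism of the open annulus, the same dichotomy follows from properness of biholomorphisms: $|\phi(z)|\to\{a,b\}$ as $z\to\partial A_{a,b}$, and a connectedness argument shows the limiting value is constant on each boundary circle.

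Next I would analyse the function $u(z):=\log|\phi(z)|$. Since $\phi$ is holomorphic and non-vanishing on $A_{a,b}$, locally $u=\mathrm{Re}\,\log\phi$ is harmonic on $\mathring{A}_{a,b}$, and it is continuous up to the boundary with boundary values either $u\equiv\log a$ on $C_a$ and $u\equiv\log b$ on $C_b$, or the reverse. The radial harmonic functions $\log|z|$ and $\log(ab)-\log|z|$ realise exactly these two boundary data, so by uniqueness for the Dirichlet problem on the compact domain $A_{a,b}$ (the maximum principle), $u(z)=\log|z|$ in the first case and $u(z)=\log(ab)-\log|z|$ in the second. Equivalently, $|\phi(z)|=|z|$ for every $z$, or $|\phi(z)\,z|=ab$ for every $z$.

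Finally I would promote this modulus information to a statement about $\phi$ itself. In the first case $g(z):=\phi(z)/z$ is holomorphic and non-vanishing on the connected annulus with $|g|\equiv 1$; a holomorphic function of constant modulus on a connected open set is constant (open mapping theorem), so $g\equiv e^{i\theta_0}$ for some $\theta_0\in\RR$ and hence $\phi(z)=e^{i\theta_0}z$. In the second case the same argument applied to $g(z):=\phi(z)\,z$, which is holomorphic with constant modulus $ab$, gives $\phi(z)\,z=ab\,e^{i\theta_0}$, that is $\phi(z)=e^{i\theta_0}\,\frac{ab}{z}$. This is precisely the asserted classification.

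The only real subtlety I anticipate is the first step: pinning down that $\phi$ extends to the boundary and sends each boundary circle onto a boundary circle. If ``automorphism'' is taken, as is standard for compact Riemann surfaces with boundary, to include being a homeomorphism of the closed annulus, this is immediate; otherwise it requires the properness (or Schwarz reflection) argument sketched above. Once that is in hand, the remainder is routine: a uniqueness statement for a two-dimensional Dirichlet problem with explicit radial solutions, followed by the elementary fact that a holomorphic function of constant modulus is constant.
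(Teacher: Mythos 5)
Your proof is correct. There is, however, nothing in the paper to compare it with: the proposition is quoted from Marsden--Hoffman [Example 6.1.13] without proof, and your write-up is essentially the standard argument that such a citation points to. All three steps are sound: (i) the boundary dichotomy --- each boundary circle preserved or the two circles swapped --- which in the closed-annulus reading is immediate because $\phi$ is a homeomorphism of the pair $(A_{a,b},\partial A_{a,b})$ and hence permutes the two boundary components, and in the open-annulus reading follows from properness together with connectedness of the cluster set along each end; (ii) the identification $\log|\phi(z)|=\log|z|$ or $\log|\phi(z)|=\log(ab)-\log|z|$ via uniqueness for the Dirichlet problem on the annulus (the two radial harmonic functions realize exactly the two admissible boundary data); and (iii) the elementary fact that a non-vanishing holomorphic function of constant modulus on a connected open set is constant, applied to $\phi(z)/z$ and to $z\,\phi(z)$, which yields $\phi(z)=e^{i\theta_0}z$ or $\phi(z)=e^{i\theta_0}\frac{ab}{z}$. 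One small point worth a sentence in a full write-up of the open-annulus variant: you should also exclude the degenerate possibility that both ends cluster on the \emph{same} boundary circle; this is automatic, since then $\log|\phi|$ would have equal constant boundary values and hence be constant by the maximum principle, so $\phi$ would map into a circle, contradicting the open mapping theorem (alternatively, it is excluded by applying the same cluster-set argument to $\phi^{-1}$).
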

\subsection{The Hopf differential for free boundary minimal annuli.} The goal in this section is to prove that any free boundary branched minimal annulus in $\BB^3$ has a very specific Hopf differential.
\begin{thm}[{\textit{Cf. }\cite[Section 4]{JLee}}]\label{thm: Hopf diff free boundary annulus}
    Let $X:(N,h) \rightarrow \mathbb{R}^3$ be a free boundary branched minimal immersion of an annulus in $\mathbb{B}^3$. Then its Hopf differential has the following form
\begin{equation}\label{eq: formula Hopf diff free boundary annulus}
    II(\partial_z,\partial_z)dz^2=\frac{C_0}{z^2}dz^2, \tag{3.2.1}
\end{equation}
where $C_0$ is a real constant.
\end{thm}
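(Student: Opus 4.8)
The plan is to combine the uniformization of annuli with the two structural properties of the Hopf differential established above: it is holomorphic (Theorem \ref{thm: Hopf diff is holomorphic}, with the would-be singularities at branch points removed by Proposition \ref{prop: Hopf diff extends to branch points}) and real on the boundary (Theorem \ref{thm: Hopf diff is real on boundary}). First I would invoke Theorem \ref{thm: uniformization of annuli} to identify $(N,h)$ conformally with $(A_{1/R,R},can)$ for some $R>1$; since a quadratic differential is intrinsic, it suffices to prove formula \ref{eq: formula Hopf diff free boundary annulus} in the single global coordinate $z$ on the round annulus, in which the Hopf differential is one holomorphic expression $\phi(z)\,dz^{2}$. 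Moreover $\phi$ extends holomorphically past the two boundary circles: either reflect the immersion across the analytic sphere $\partial\BB^{3}$ (cf. the extension step in the proof of Proposition \ref{prop: Hopf diff extends to branch points}), or apply the Schwarz reflection principle directly to $\phi$ once we know it is real on the boundary. In particular $\phi$ is holomorphic on an open annulus strictly containing $\{1/R\le|z|\le R\}$, so its Laurent series converges up to and on both boundary circles.

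Next I would make the boundary condition explicit in the coordinate $z$. To say that the Hopf differential is real on the boundary is to say that it takes real values on vectors tangent to $\partial N$ --- this is the coordinate-free content of the definition, which is phrased via isothermic charts in which the boundary is a straight segment. Parametrizing the circle $|z|=r$ by $\theta\mapsto re^{i\theta}$, the square of the tangent vector is $-r^{2}e^{2i\theta}=-z^{2}$, so the condition becomes $z^{2}\phi(z)\in\RR$ on each of $|z|=1/R$ and $|z|=R$. Hence the function $g(z):=z^{2}\phi(z)$ is holomorphic on a neighborhood of the closed annulus and real on both boundary circles.

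The computation is then short. Expand $g(z)=\sum_{m\in\mathbb{Z}}b_{m}z^{m}$. Reality of $g$ along $|z|=r$ means $\overline{g(re^{i\theta})}=g(re^{i\theta})$ for all $\theta$, and comparing Fourier coefficients in $\theta$ gives $b_{-m}=\overline{b_{m}}\,r^{2m}$ for every $m$. Imposing this for $r=R$ and for $r=1/R$ and subtracting yields $\overline{b_{m}}\,(R^{2m}-R^{-2m})=0$ for all $m$; since $R>1$ this forces $b_{m}=0$ for every $m\neq 0$, while $m=0$ gives $b_{0}=\overline{b_{0}}\in\RR$. Therefore $g\equiv C_{0}$ for a real constant $C_{0}$, that is $\phi(z)=C_{0}/z^{2}$, which is precisely \ref{eq: formula Hopf diff free boundary annulus}. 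I would finish by noting that the conclusion does not depend on the chosen uniformizing diffeomorphism: by Proposition \ref{prop: automorphism group annulus} any two such differ by $z\mapsto e^{i\theta_{0}}z$ or $z\mapsto e^{i\theta_{0}}/z$ (recall $ab=1$ here), and the quadratic differential $C_{0}z^{-2}\,dz^{2}$ is invariant under both.

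I expect the main subtlety to be the bookkeeping in the second paragraph --- reconciling the abstract notion of a quadratic differential being ``real on the boundary'', formulated in a chart where the boundary is straight, with the concrete requirement $z^{2}\phi(z)\in\RR$ on the curved boundary circles of the round annulus --- together with making sure the reflection step is in place so that the Laurent expansion of $g$ may legitimately be evaluated on the boundary. Once these are settled, the Fourier-coefficient comparison needs nothing beyond nondegeneracy of the annulus, $R>1$.
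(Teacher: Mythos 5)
Your proposal is correct and follows essentially the same strategy as the paper: uniformize to the round annulus, extend the Hopf differential holomorphically across the branch points, translate the free boundary condition into the statement that $g(z)=z^{2}\phi(z)$ is holomorphic and real on both boundary circles, and conclude that $g$ is a real constant. The only divergence is the last step: the paper applies the maximum principle to the harmonic function $\mathrm{Im}(g)$, which vanishes on the boundary and hence identically, forcing $g$ to be constant, whereas you compare Laurent--Fourier coefficients on the two circles --- a valid alternative, though it obliges you to justify evaluating the Laurent expansion on the boundary circles (your reflection step, or a limiting argument), which the maximum-principle route avoids.
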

\begin{proof}
We merely adapt the proof of J. Lee and E. Yeon \cite{JLee} to the case of branched immersions. By the Uniformization Theorem \ref{thm: uniformization of annuli}, assume without loss of generality that the annulus is the canonical one $(A_{a,b},can)$. Additionally, by Proposition \ref{prop: Hopf diff extends to branch points}, the Hopf differential extends holomorphically across the branch points. Therefore we can define the holomorphic function on the annulus
\begin{equation}
f(z)=z^2II(\partial_z,\partial_z). \notag
\end{equation}
By Theorem \ref{thm: Hopf diff is real on boundary}, the free boundary condition implies that the imaginary part of the function $f$ vanishes on the boundary circles of the annulus. Since $Im(f)$ is harmonic on $A_{a,b}$ and vanishes on $\partial A_{a,b}$, the maximum principle implies $Im(f)\equiv 0$. Moreover, $Re(f)$ is the harmonic conjugate of $Im(f)$, so $Re(f)=C_0$ for some real constant $C_0$. Hence 
\begin{equation}
II(\partial_z,\partial_z)dz^2=\frac{C_0}{z^2} dz^2,\notag
\end{equation}
and the claim is proved.
\end{proof}
\noindent The explicit formula \ref{eq: formula Hopf diff free boundary annulus} has the following immediate consequence, depending on whether the constant $C_0$ is zero or not:
\begin{cor}\label{cor: annuli are free of umbilical points}
    Any free boundary branched minimal immersion of an annulus in the unit ball $\mathbb{B}^3$ is either totally geodesic or has neither branch points nor umbilical points.
\end{cor}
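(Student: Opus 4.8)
The plan is to read off the conclusion directly from the explicit formula for the Hopf differential in Theorem \ref{thm: Hopf diff free boundary annulus}, splitting into two cases according to the value of the real constant $C_0$ appearing there.

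First I would treat the case $C_0 = 0$. Then $II(\partial_z,\partial_z)dz^2 \equiv 0$ on the annulus, so by the reconstruction formula \ref{eq: Second fundamental form as real part of Hopf differential} the second fundamental form $II$ vanishes identically; hence every point is umbilic with $\lambda = 0$ and the immersion is totally geodesic (so, in fact, a piece of a plane, by the remark following \ref{eq: defsecondftalform}). Note that this is consistent with $X$ having no branch points: a totally geodesic immersion is an isometric immersion into a plane, which is automatically an honest immersion.

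Second I would treat the case $C_0 \neq 0$. Then the holomorphic function $\phi(z) = C_0/z^2$ is nowhere zero on the annulus $A_{a,b}$, since $a > 0$. By Proposition \ref{prop: Hopf diff extends to branch points}, the Hopf differential vanishes at every branch point; but here it vanishes nowhere, so there are no branch points. Similarly, an umbilic point $p$ would, by definition, satisfy $II_p = \lambda h_p$ for some $\lambda \in \RR$; writing this in an isothermal chart $z$ and taking the $(\partial_z,\partial_z)$ component (which kills the conformal metric term, since $h(\partial_z,\partial_z) = 0$), we get $II(\partial_z,\partial_z)(p) = 0$, i.e. $\phi(p) = 0$, again impossible. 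Hence the immersion has neither branch points nor umbilical points.

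There is no real obstacle here; the only point requiring a line of justification is the elementary fact that umbilicity in an isothermal chart forces the vanishing of the Hopf differential's coefficient (because the conformal factor contributes nothing to the $dz^2$-part), which is where the correspondence between umbilic points and zeros of the Hopf differential comes from. Combining the two cases gives the dichotomy in the statement.
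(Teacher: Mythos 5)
Your proof is correct and is essentially the paper's argument: the paper states the corollary as an immediate consequence of the formula $II(\partial_z,\partial_z)dz^2=C_0 z^{-2}dz^2$, splitting on whether $C_0$ vanishes, exactly as you do, with the $C_0\neq 0$ case handled via Proposition \ref{prop: Hopf diff extends to branch points} (branch points are zeros of the Hopf differential) and the observation that $h(\partial_z,\partial_z)=0$ in an isothermal chart forces the Hopf differential to vanish at umbilic points. One parenthetical remark of yours is false, though it does not affect the corollary: in the totally geodesic case a \emph{branched} minimal immersion can perfectly well have branch points (the map is a nonconstant holomorphic function into a plane, whose derivative may vanish), and indeed the paper's Example \ref{ex: alberto example} exhibits a totally geodesic free boundary branched minimal annulus with two branch points; the corollary simply makes no claim about branch points in that case.
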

\begin{ex}[{A. Cerezo's example}]\label{ex: alberto example}
    \normalfont{There are examples of totally geodesic free boundary branched minimal immersions of annuli type in the unit ball $\BB^3$. Consider a horizontal strip $S=\{z=u+iv\in \CC| -1\leq v\leq 1\}$ and the holomorphic map $X:S\rightarrow \CC$ given by $X(z)=\sin(z)$ which can be written as
    \begin{equation*}
        X(u,v)=\sin(u)\cosh(v)+i\cos(u)\sinh(v).
    \end{equation*}
    For fixed values $v_0\in \RR$, the $u$-curves $X(u,v_0)$ are ellipses centered at the origin, and the map $X$ is $u$-periodic $X(u+2\pi,v_0)=X(u,v_0)$. Therefore it factors to a holomorphic map from an annulus $A$ into some planar ellipse which is conformally equivalent to the disk. By composing $X$ with an appropriate biholomorphism followed by the inclusion of the disk in the unit ball, we obtain a free boundary minimal immersion $\Tilde{X}:A\rightarrow \mathbb{B}^3$ which has exactly two branch points of order two. In fact, $X'(z)=\cos(z)$, so $X'(p)=0$ exactly at the points $p=(k+\frac{1}{2})\pi$ with $k\in \mathbb{Z}$}. By the $2\pi$-periodicity of $X$, the map $\Tilde{X}$ has only two branch points in the annulus.
\end{ex}
\section{Free boundary minimal Möbius bands}\label{Section 4}

\subsection{Uniformization of Möbius bands} In this section we consider (compact) Möbius bands. We start by finding the explicit form of the deck transformation on the double cover.
\begin{thm}\label{thm: Uniformization mobius bands}
    Let $(M,h)$ be a Möbius band. Then there exists a positive radius $R$ such that $(M,h)$ is isometric to the quotient of the canonical annulus $(A_{\frac{1}{R},R},can)$ by the $\mathbb{Z}_2$ action of the anti-holomorphic isometry $T(z)=-\frac{1}{\overline{z}}$.
\end{thm}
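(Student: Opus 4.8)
The plan is to lift the problem to the orientable double cover and reduce Theorem~\ref{thm: Uniformization mobius bands} to a classification of anti-conformal fixed-point-free involutions of the canonical annulus. First I would form the orientable double cover $\pi\colon(\widehat M,\pi^{*}h)\to(M,h)$ with the pulled-back metric, so that $\pi$ is a local isometry; since $M$ is a Möbius band, $\widehat M$ is diffeomorphic to $\mathbb S^{1}\times[0,1]$ and the nontrivial deck transformation $\tau\colon\widehat M\to\widehat M$ is a fixed-point-free involution which, because the base $M$ is non-orientable, reverses orientation. Being simultaneously a local isometry and orientation-reversing, $\tau$ is anti-conformal for the conformal structure determined by $\pi^{*}h$. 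By Theorem~\ref{thm: uniformization of annuli} there are $R>0$ and a conformal diffeomorphism $\phi\colon\widehat M\to(A_{\frac{1}{R},R},can)$; conjugating, $T_{0}:=\phi\circ\tau\circ\phi^{-1}$ is then an anti-conformal, fixed-point-free involution of $A_{\frac{1}{R},R}$, where the defining radii satisfy $ab=\frac{1}{R}\cdot R=1$.

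Next I would classify such a $T_{0}$. Complex conjugation $c(z)=\overline z$ preserves $A_{\frac{1}{R},R}$, so $c\circ T_{0}$ is a biholomorphism of the annulus, and Proposition~\ref{prop: automorphism group annulus} together with $ab=1$ forces $T_{0}(z)=e^{i\theta_{0}}\overline z$ or $T_{0}(z)=e^{i\theta_{0}}/\overline z$ for some $\theta_{0}\in\RR$. The first case is always an involution, but it fixes the two radial segments with $\arg z\equiv\theta_{0}/2\pmod{\pi}$, contradicting that $T_{0}$ has no fixed point. In the second case, iterating yields $T_{0}\circ T_{0}(z)=e^{2i\theta_{0}}z$, so the involution property forces $e^{2i\theta_{0}}=1$; if $\theta_{0}\equiv 0$ then $z\mapsto 1/\overline z$ fixes the circle $|z|=1$, which lies in $A_{\frac{1}{R},R}$ since $\frac{1}{R}<1<R$, again impossible, whereas $\theta_{0}\equiv\pi$ gives $T_{0}(z)=-1/\overline z$, which is fixed-point-free because $z=-1/\overline z$ would force $|z|^{2}=-1$. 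Hence $T_{0}=T$, so $\phi$ intertwines $\tau$ with $T$ and descends to the desired identification $M=\widehat M/\langle\tau\rangle\cong A_{\frac{1}{R},R}/\langle T\rangle$.

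I expect the delicate step to be the classification above: one must carefully exclude the reflection-type involutions by exhibiting their fixed radial segments and pin down $\theta_{0}\equiv\pi$, and the argument genuinely uses the normalization $ab=1$ that comes from working with $A_{\frac{1}{R},R}$ rather than a general $A_{a,b}$. The other point that must not be glossed over is \emph{why} $\tau$ is anti-conformal rather than conformal, which is precisely the non-orientability of $M$ entering through the orientation double cover.
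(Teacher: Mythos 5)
Your proof is correct and follows essentially the same route as the paper: pass to the orientable double cover, note the deck involution is an anti-conformal fixed-point-free involution, uniformize via Theorem~\ref{thm: uniformization of annuli}, and classify the conjugated involution using Proposition~\ref{prop: automorphism group annulus}. The only (immaterial) difference is that you exclude the case $T(z)=1/\overline z$ by exhibiting its fixed circle $|z|=1$, while the paper excludes it by observing that the quotient would not be a Möbius band.
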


\begin{proof}
Let $\pi:\widehat{M}\rightarrow M$ be the orientable double cover of $M$. $\widehat{M}$ is diffemorphic to $\mathbb{S}^1\times [0,1]$. Endow $\widehat{M}$ with the pull-back metric $\widehat{h}=\pi^{*}h$ inducing a complex structure on $\widehat{M}$ since it is also orientable. Let $I$ be the deck transformation on the double cover $\widehat{M}$ that changes orientation, which is an involution $I\circ I=Id_{\widehat{M}}$, with no fixed points. Since $I:\widehat{M}\rightarrow \widehat{M}$ is an orientation-reversing isometry, it is an anti-holomorphic map on the Riemann complex structure of $\widehat{M}$. The Möbius band $M$ is then recovered isometrically as $(M,h)=(\widehat{M},\widehat{h})/\langle I \rangle$. By Theorem \ref{thm: uniformization of annuli}, there exists $R>0$ and a conformal diffeomorphism $\phi:(A_{\frac{1}{R},R},can)\rightarrow (\widehat{M},\widehat{h}).$\\\\ Consider the transformation $T:A_{\frac{1}{R},R}\rightarrow A_{\frac{1}{R},R}$ defined by conjugation of the involution $I$
\begin{equation}
    T=\phi^{-1}\circ I \circ \phi. \notag
\end{equation}
We check immediately that $T$ is an involution which is also anti-holomorphic and without fixed points. Therefore $\overline{T}$ is an automorphism of the annulus $A_{\frac{1}{R},R}$. By Theorem \ref{prop: automorphism group annulus}, there exists $\theta_{0}\in \mathbb{R}$ such that either $T(z)=e^{-i\theta_0}\overline{z}$ or that $T(z)=e^{-i\theta_0}\frac{1}{\overline{z}}$. The first possibility is ruled out, because it has infinitely many fixed points of the form $z_0=r_0e^{-\frac{i\theta_0}{2}}$, with $\frac{1}{R}\leq r_0 \leq R$. It follows that the transformation is $T(z)=e^{-i\theta_0}\frac{1}{\overline{z}}$ for some $\theta_0\in \RR$. Since $T^2=Id$, $e^{-2i\theta_0}=1$, so $\theta_0=\pi\cdot l$, where $l\in \mathbb{Z}$. Therefore the only possibilities for the anti-holomorphic involution $T$ are either $T(z)=\frac{1}{\overline{z}}$ or $T(z)=-\frac{1}{\overline{z}}$. Since we recover topologically the Möbius band as the quotient $A_{\frac{1}{R},R}/\langle T \rangle$, the only possibility is to have $T(z)=-\frac{1}{\overline{z}}$.
\end{proof}
\begin{rem}\label{rem: canonical way to see mobius bands}
    Theorem \ref{thm: Uniformization mobius bands} allows us to view any branched minimal immersion $X:(M,h) \rightarrow \mathbb{R}^3$ of a Möbius band in a canonical way. In fact, by the Uniformization Theorem \ref{thm: uniformization of annuli}, we can always see its lift to the double cover as the branched minimal immersion $\widehat{X}=X\circ \pi:(A_R,can)\rightarrow \RR^3$ of the canonical annulus $A_R:=A_{\frac{1}{R},R}$ for some radius $R>0$. Moreover, $\widehat{X}$ is such that $\widehat{X}(T(z))=\widehat{X}(z)$ for all $z\in A_R$, where $T(z)=-\frac{1}{\overline{z}}$ is the deck transformation.
\end{rem}
\begin{rem}\label{rem: Mobius bands cannot be on plane}
    No branched minimal immersion $X:(M,h) \rightarrow \mathbb{R}^3$ of a non-orientable surface has its image contained on a plane. Otherwise, the lifted map $\widehat{X}$ to the orientable double cover $\widehat{\Sigma}$, has a normal vector parallel to a fixed direction, in opposite directions at points in the same fiber of the projection map $\pi:\widehat{\Sigma}\rightarrow \Sigma$. Consequently, by continuity the normal vector is zero at some point, which is a contradiction because the normal vector is non-zero at regular points, and also non-zero at the branch points according to equation \ref{eq: limit normal at branch}.
\end{rem}
\subsection{The transformation law of the Hopf differential}
Given a branched minimal immersion of a Möbius band in $\RR^3$, we derive certain restrictions imposed on the Hopf quadratic differential of its double cover. Given $R>0$, for each $\theta\in \RR$ we denote the slit annulus
\begin{equation}
    U_{\theta}=A_R\setminus \{t\cdot e^{i\theta}| \frac{1}{R}\leq t \leq R \} \subset A_R, \notag
\end{equation}
which is a simply connected domain.
\begin{prop}[{\textit{Cf. }\cite[Proposition 1]{Meeks}}]\label{prop: transformation law of holomorphic data}
    Let $X:(M,h) \rightarrow \mathbb{R}^3$ be a branched minimal immersion of a Möbius band. Then, for each $\theta\in \mathbb{R}$, the associated holomorphic data $(g,f)$ on the slit annulus $U_{\theta}$ satisfies for every $z\in U_{\theta}\cap U_{-\theta}$ the following restrictions:
\begin{equation}\label{eq: transformation law of g function}
    g(T(z))=T(g(z)), \tag{4.2.1}
\end{equation}
\begin{equation}\label{eq: transformation law for f function}
    f(T(z))= -\overline{f(z)(zg(z))^2},\tag{4.2.2}
\end{equation}
where $T(z)=-\frac{1}{\overline{z}}$ is the deck transformation of the orientable double cover.
\end{prop}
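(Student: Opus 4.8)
The plan is to exploit the fact that the immersion $X$ of the Möbius band, when lifted to the double cover, is invariant under $T(z)=-\frac{1}{\overline{z}}$, and to translate this invariance into conditions on the Weierstrass data. First I would fix $\theta\in\RR$ and work on the simply connected slit annulus $U_\theta$, where Theorem \ref{thm: Weierstrass rep} applies and gives holomorphic data $(g,f)$ representing $\widehat X$ via formula \ref{eq: formula Weierstrass}. Since $g$ is the stereographic image of the Gauss map, the key observation is how the unit normal $N_\Sigma$ transforms under $T$. Differentiating the identity $\widehat X(T(z))=\widehat X(z)$ from Remark \ref{rem: canonical way to see mobius bands} and comparing tangent planes, the normal line is preserved by $T$ but, because $T$ reverses orientation on the surface, the oriented normal is sent to its negative: $N_\Sigma(\widehat X(T(z)))=-N_\Sigma(\widehat X(z))$. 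Under stereographic projection, the antipodal map on $\mathbb{S}^2$ corresponds to $w\mapsto -\frac{1}{\overline{w}}$, which is exactly the map $T$ itself on the $w$-sphere; this yields \ref{eq: transformation law of g function}, namely $g(T(z))=-\frac{1}{\overline{g(z)}}=T(g(z))$.

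Next I would extract \ref{eq: transformation law for f function} by pulling back formula \ref{eq: formula Weierstrass} through $T$. The cleanest route is to differentiate: writing $\Phi=(\phi_1,\phi_2,\phi_3)$ for the Weierstrass triple $\bigl(\tfrac12 f(1-g^2),\tfrac{i}{2}f(1+g^2),fg\bigr)$, the immersion satisfies $\widehat X_z=\Phi$. From $\widehat X(T(z))=\widehat X(z)$ and the chain rule, using that $T$ is anti-holomorphic with $\partial_{\bar z} T = \frac{1}{\overline z^2}$, one gets a relation of the form $\overline{\Phi(T(z))}\cdot \overline{T'}= \Phi(z)$ (up to the usual bookkeeping of $\partial_z$ versus $\partial_{\bar z}$ for the composition with an anti-holomorphic map), i.e.
\begin{equation*}
\Phi(T(z)) = \frac{\overline{z}^{\,2}}{|z|^4}\,\overline{\Phi(z)}\cdot(\text{sign})= \overline{z}^{\,2}\,\overline{\Phi(z)}\Big/|z|^{4}.
\end{equation*}
Now I substitute the already-established transformation $g(T(z))=-1/\overline{g(z)}$ into each component of $\Phi(T(z))$ and solve the resulting (consistent, overdetermined) linear system for $f(T(z))$ in terms of $f(z)$, $g(z)$ and $z$. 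For instance the third component gives $f(T(z))g(T(z))=\overline{z}^2\,\overline{f(z)g(z)}/|z|^4$, and plugging in $g(T(z))=-1/\overline{g(z)}$ produces $f(T(z))=-\overline{z}^2\,|g(z)|^2\,\overline{f(z)g(z)}/|z|^4 = -\overline{f(z)}\,\overline{g(z)}\cdot \overline{z}^2 g(z)/\,\overline{z}^2$... — carrying the conjugates carefully and using $z\overline z=|z|^2$, this collapses to $f(T(z)) = -\overline{f(z)\,(z g(z))^2}$, which is \ref{eq: transformation law for f function}; I would then check the first two components are automatically consistent, which they must be since $\Phi$ satisfies the conformality relation $\phi_1^2+\phi_2^2+\phi_3^2=0$.

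The main obstacle I anticipate is bookkeeping rather than conceptual: getting the conjugation and the factor $\overline{z}^{\,2}/|z|^4$ exactly right when composing a holomorphic-data formula with the anti-holomorphic involution $T$, since $\widehat X\circ T$ is still real-valued but its $\partial_z$-derivative mixes $\Phi$ and $\overline{\Phi}$. A secondary subtlety is the sign in $N_\Sigma\circ \widehat X\circ T = -N_\Sigma\circ \widehat X$: this must be justified from the orientation-reversing nature of the deck map (not merely $T^2=\mathrm{id}$), and near branch points one should invoke equation \ref{eq: limit normal at branch} to see that the normal — and hence $g$ — extends continuously, so \ref{eq: transformation law of g function} and \ref{eq: transformation law for f function}, proven on the dense open set where $\widehat X$ is an immersion, extend to all of $U_\theta\cap U_{-\theta}$ by continuity and the identity principle. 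Finally, since $\theta$ was arbitrary and the two charts $U_\theta$, $U_{-\theta}$ are related precisely by $T$, the stated relations hold wherever both sides are defined, completing the proof.
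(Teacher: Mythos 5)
Your strategy is essentially the paper's: the relation \ref{eq: transformation law of g function} is obtained, exactly as in the paper, by noting that the orientation-reversing deck map sends the oriented normal to its antipode and that the antipodal map of $\mathbb{S}^2$ reads $w\mapsto -1/\overline{w}$ in the stereographic coordinate; and the relation \ref{eq: transformation law for f function} is extracted from the invariance $\widehat X\circ T=\widehat X$ applied to the third Weierstrass coordinate. The paper does the latter at the level of the integral $Re\int fg\,d\zeta$ and differentiates at the end, while you differentiate the identity directly; these are equivalent.

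There is, however, a concrete error in your key displayed formula, which moreover contradicts the chain-rule identity you write just before it. From $\overline{\Phi(T(z))}\cdot\overline{T'}=\Phi(z)$ with $T'=\partial_{\bar z}T=1/\overline{z}^{\,2}$, hence $\overline{T'}=1/z^2$, one gets $\overline{\Phi(T(z))}=z^2\Phi(z)$, i.e. $\Phi(T(z))=\overline{z}^{\,2}\,\overline{\Phi(z)}$; there is no factor $1/|z|^4$. (Your coefficient $\overline{z}^{\,2}/|z|^4$ equals $1/z^2$, which is the reciprocal of the correct factor $\overline{z}^{\,2}$ except on $|z|=1$.) Taken at face value, your relation would give $f(T(z))=-\overline{z}^{\,2}\,\overline{f(z)g(z)^2}/|z|^4$, which is not \ref{eq: transformation law for f function}, and the chain of equalities by which you ``collapse'' to the stated answer does not follow from it. With the corrected relation the computation is clean: the third component gives $f(T(z))g(T(z))=\overline{z}^{\,2}\,\overline{f(z)g(z)}$, and substituting $g(T(z))=-1/\overline{g(z)}$ yields $f(T(z))=-\overline{z}^{\,2}\,\overline{g(z)}\,\overline{f(z)g(z)}=-\overline{f(z)(zg(z))^2}$. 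So the method is sound and coincides with the paper's; only this one line of bookkeeping, which you yourself flagged as the delicate point, needs to be redone.
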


\begin{proof} 
We include the proof of \cite{Meeks} for the sake of completeness.\\\\ By Remarks \ref{rem: canonical way to see mobius bands} and \ref{rem: Mobius bands cannot be on plane} there is an induced non-planar branched minimal immersion $\widehat{X}:(A_R,can)\rightarrow \RR^3$ on the orientable double cover with deck transformation $T(z)=-\frac{1}{\overline{z}}$. Fixing $\theta \in \RR$, by the Enneper-Weierstrass representation Theorem \ref{thm: Weierstrass rep} there exist a non-zero meromorphic function $g:U_{\theta}\rightarrow \mathbb{C}$ and a non-zero holomorphic function $f:U_{\theta}\rightarrow \mathbb{C}$ such that 
\begin{equation}\label{eq: Weierstrass formula for law transormation}
    \widehat{X}(z)=\widehat{X}(p_0)+Re\int_{p_0}^z\left[\frac{1}{2}f(1-g^2),\frac{i}{2}f(1+g^2),fg\right]d\zeta, \tag{4.2.3}
\end{equation}
holds for all $z$ and $p_0$ in $U_{\theta}$.\\\\ Fix $z_0$ in the doubly-connected set $U_{\theta}\cap U_{-\theta}$. We observe that $z_0$ and $T(z_0)$ will be in different components. Now take any $z\in U_{\theta}\cap U_{-\theta}$.  Since the deck transformation $T$ is orientation-reversing, the normal vectors of the surface at the points $\widehat{X}(z)$ and $\widehat{X}(T(z))$ are opposite to each other
\begin{equation}
    pr^{-1}\circ g(T(z))=N_{\Sigma}(\widehat{X}(T(z)))=-N_{\Sigma}(\widehat{X}(z))=-pr^{-1}\circ g(z). \notag
\end{equation}
Therefore we have that
\begin{equation*}
    \left\lbrace\frac{2g(T(z))}{1+|g(T(z))|^2},\frac{|g(T(z))|^2-1}{|g(T(z))|^2+1}\right\rbrace=-\left\lbrace\frac{2g(z)}{1+|g(z)|^2},\frac{|g(z)|^2-1}{|g(z)|^2+1}\right\rbrace.
\end{equation*}
From here we easily obtain the following two relations
\begin{gather}
    |g(z)|^2|g(T(z))|^2=1, \notag\\g(T(z))=-\frac{1+|g(T(z))|^2}{1+|g(z)|^2}g(z). \notag
\end{gather}
Hence
\begin{equation}
    g(T(z))=-\frac{g(z)}{|g(z)|^2}=-\frac{1}{\overline{g(z)}}=T(g(z)),\notag
\end{equation}
which proves \ref{eq: transformation law of g function}.\\\\ For the second part of the proof, let $p_0\in \{z_0,T(z_0)\}$, so that $z$ and $p_0$ are in the same connected component of $U_{\theta}\cap U_{-\theta}$. Using equation \ref{eq: Weierstrass formula for law transormation} and taking the third coordinate on the identity $\widehat{X}(T(z))=\widehat{X}(z)$, we have:
\begin{gather*}
    \widehat{X}_3(p_0)+Re\int_{p_0}^z f(\zeta)g(\zeta)d\zeta=\widehat{X}_3(z)=\widehat{X}_3(T(z))= \widehat{X}_3(p_0)+Re\int_{p_0}^{T(z)} f(\zeta)g(\zeta)d\zeta\\\\=\widehat{X}_3(p_0)+Re\int_{p_0}^{T(p_0)} f(\zeta)g(\zeta)d\zeta+Re\int_{T(p_0)}^{T(z)} f(\zeta)g(\zeta)d\zeta.
    \end{gather*}
Let us define the constant $v_0\in \RR$ by
\begin{equation*}
    v_0=Re\int_{p_0}^{T(p_0)} f(\zeta)g(\zeta)d\zeta,
\end{equation*}
then, we obtain
\begin{gather*}
    \widehat{X}_3(p_0)+Re\int_{p_0}^z f(\zeta)g(\zeta)d\zeta=\widehat{X}_3(p_0)+v_0+Re\int_{p_0}^{z} T^{*}\left(f(\zeta)g(\zeta)d\zeta\right)\\\\=\widehat{X}_3(p_0)+v_0+Re\int_{p_0}^{z} f(T(\zeta))g(T(\zeta))d\left(-\frac{1}{\overline{\zeta}}\right)\\\\=\widehat{X}_3(p_0)+v_0+Re\int_{p_0}^{z} f(T(\zeta))\left(-\frac{1}{\overline{g(\zeta)}}\right)\left(\frac{d\overline{\zeta}}{\overline{\zeta}^2}\right)=\widehat{X}_3(p_0)+v_0-Re\int_{p_0}^{z} \frac{f(T(\zeta))}{\overline{g(\zeta)\zeta^2}}d\overline{\zeta}\\\\=\widehat{X}_3(p_0)+v_0-Re\overline{\int_{p_0}^{z} \frac{f(T(\zeta))}{\overline{g(\zeta)\zeta^2}}d\overline{\zeta}}=\widehat{X}_3(p_0)+v_0-Re\int_{0}^{z} \frac{\overline{f(T(\zeta))}}{g(\zeta)\zeta^2}d\zeta.
\end{gather*}
That is
\begin{equation}
     Re\int_{p_0}^z f(\zeta)g(\zeta)d\zeta=v_0-Re\int_{p_0}^{z} \frac{\overline{f(T(\zeta))}}{g(\zeta)\zeta^2}d\zeta. \notag
\end{equation}
Taking derivatives with respect to $z$, we conclude that
\begin{equation}
   f(z)g(z)=-\frac{\overline{f(T(z))}}{g(z)z^2}, \notag
\end{equation}
from which equation \ref{eq: transformation law for f function} follows.
\end{proof}
\noindent We are ready to state the transformation law satisfied by the Hopf differential.
\begin{thm}\label{thm: transformation law of the Hopf differential}
    Let $X:(M,h) \rightarrow \mathbb{R}^3$ be a branched minimal immersion of a Möbius band. Then the Hopf differential associated to the orientable double cover transforms under the anti-holomorphic deck involution $T$ according to the rule
\begin{equation}\label{eq: law transformation for Hopf differential}
T^{*}\left(II(\partial_z,\partial_z)dz^2\right)=-\overline{\left(II(\partial_z,\partial_z)dz^2\right)}. \tag{4.2.4}
\end{equation}
\end{thm}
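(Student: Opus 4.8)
The plan is to establish \ref{eq: law transformation for Hopf differential} by a direct local computation with the Enneper--Weierstrass data, feeding in the transformation rules of Proposition \ref{prop: transformation law of holomorphic data}. By Proposition \ref{prop: Hopf diff extends to branch points} together with Theorem \ref{thm: Hopf diff is holomorphic}, the Hopf differential is a globally defined holomorphic quadratic differential on $A_R$; both sides of \ref{eq: law transformation for Hopf differential} are then continuous sections of the same bundle over $A_R$, so it suffices to prove the identity on a dense open subset. Fixing $\theta\in\RR$, I would therefore work on a slit annulus $U_{\theta}$, on which \ref{eq: Hopf diff with holomorphic data} provides $II(\partial_z,\partial_z)\,dz^2=-\tfrac{1}{2}\,g'(z)f(z)\,dz^2$ in terms of holomorphic data $(g,f)$, and verify \ref{eq: law transformation for Hopf differential} on $U_{\theta}\cap U_{-\theta}$, where Proposition \ref{prop: transformation law of holomorphic data} applies.

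First I would unwind the action of $T^{*}$ on quadratic differentials. Since $T(z)=-1/\overline z$ is anti-holomorphic with $d\big(T(z)\big)=\overline z^{-2}\,d\overline z$, one has $T^{*}\big(\varphi(w)\,dw\otimes dw\big)=\varphi\big(T(z)\big)\,\overline z^{-4}\,d\overline z\otimes d\overline z$ for any locally defined $\varphi$, while $-\overline{\varphi(z)\,dz\otimes dz}=-\overline{\varphi(z)}\,d\overline z\otimes d\overline z$. Hence, with $\varphi=-\tfrac12 g'f$, the identity \ref{eq: law transformation for Hopf differential} reduces to the scalar relation $g'\big(T(z)\big)f\big(T(z)\big)=-\overline z^{4}\,\overline{g'(z)f(z)}$ on $U_{\theta}\cap U_{-\theta}$.

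Next I would differentiate the transformation law \ref{eq: transformation law of g function}. The cleanest route is to conjugate it first, obtaining the meromorphic identity $\overline{g\circ T}=-1/g$; differentiating in $z$ and using that $T$ feeds the factor $\partial_{\overline z}T(z)=\overline z^{-2}$ into the chain rule, so that $\partial_z\big(\overline{g(T(z))}\big)=\overline{g'(T(z))}\,z^{-2}$, yields $\overline{g'(T(z))}\,z^{-2}=g'(z)/g(z)^2$, that is,
\begin{equation*}
g'\big(T(z)\big)=\frac{\overline z^{2}\,\overline{g'(z)}}{\overline{g(z)}^{2}}.
\end{equation*}
Inserting this together with \ref{eq: transformation law for f function}, rewritten as $f\big(T(z)\big)=-\overline{f(z)}\,\overline z^{2}\,\overline{g(z)}^{2}$, the factors $\overline{g(z)}^{2}$ cancel and one gets $g'\big(T(z)\big)f\big(T(z)\big)=-\overline z^{4}\,\overline{g'(z)}\,\overline{f(z)}=-\overline z^{4}\,\overline{g'(z)f(z)}$, which is exactly the scalar relation of the previous paragraph. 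As $\theta\in\RR$ was arbitrary, \ref{eq: law transformation for Hopf differential} then holds on a dense open subset of $A_R$, and hence on all of $A_R$ by continuity.

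I expect the only genuinely delicate point to be the differentiation of \ref{eq: transformation law of g function}: one must keep track of the fact that $T$ contributes a factor $\overline z^{-2}$ to $\partial_{\overline z}$, and that conjugating turns it into a $z^{-2}$ in $\partial_z$, so that a misplaced sign or a dropped conjugate would wreck the final cancellation. Everything else is a one-line manipulation; as a minor consistency remark one should also note that $g'f$ is holomorphic even across the poles of $g$ --- at a pole of $g$ of order $m$ the function $f$ vanishes to order $2m$ while $g'$ has a pole of order $m+1$, so $g'f$ vanishes to order $m-1\ge 0$ --- which is what makes the extension from $U_{\theta}$ to all of $A_R$ harmless.
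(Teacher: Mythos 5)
Your proposal is correct and follows essentially the same route as the paper: both express the Hopf differential on a slit annulus via the Weierstrass data as $-\tfrac12 g'f\,dz^2$, feed in the transformation laws \ref{eq: transformation law of g function} and \ref{eq: transformation law for f function}, and carry out the same chain-rule computation (you phrase it as a scalar identity for $g'(T(z))f(T(z))$, the paper as a manipulation of $dg\otimes f\,dz$, and you pass from the slit annuli to all of $A_R$ by density and continuity rather than pointwise in $\xi$ --- cosmetic differences only). The final cancellation of $\overline{g}^{2}$ and the resulting factor $-\overline{z}^{4}\,\overline{g'f}$ match the paper's computation exactly.
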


\begin{proof}
Let $II(\partial_z,\partial_z)dz^2$ be the Hopf differential associated to the branched minimal immersion $\widehat{X}:(A_R,can)\rightarrow \RR^3$ as in Remark \ref{rem: canonical way to see mobius bands}. Since the Hopf differential is a $2$-tensor, we need to verify the law of transformation point-wisely. Fix $\xi\in A_R$ and consider the slit annulus $U_{\theta}$, where $\theta\in \mathbb{R}$ is chosen appropriately so that $\xi\in U_{\theta}\cap U_{-\theta}$.\\\\ The branched immersion $\widehat{X}$ is non-planar because from Remark \ref{rem: Mobius bands cannot be on plane}, the image of $X$ is not contained on a plane. By Theorem \ref{thm: Weierstrass rep}, there exist non-zero holomorphic data $(g,f)$ on $U_{\theta}$ so that the branched minimal immersion $\widehat{X}$ is represented by the formula \ref{eq: formula Weierstrass}. On the hand, by \ref{eq: Hopf diff with holomorphic data}, the quadratic Hopf differential can be written in this isothermic chart $U_{\theta}$ in the following way: 
\begin{equation}
    II(\partial_z,\partial_z)dz^2=-\frac{g'(z)f(z)}{2}dz^2.\notag
\end{equation}
Therefore, by Proposition \ref{prop: transformation law of holomorphic data}, we compute at $\xi\in A_R$:
\begin{gather*}
T^{*}\left(II(\partial_z,\partial_z)dz^2\right)= \frac{1}{2}T^{*}\left(-g'(z)f(z)dz^2\right)= \frac{1}{2}T^{*}\left(-dg \otimes f(z)dz\right)\\\\=-\frac{1}{2}d(g\circ T)\otimes (f\circ T)(z) d T(z)=-\frac{1}{2}d(T\circ g)\otimes \left(-\overline{f(z)(zg(z))^2} d\left(-\frac{1}{\overline{z}}\right) \right)\\\\=-\frac{1}{2}d\left(-\frac{1}{\overline{g}}\right)\otimes \left(-\overline{f(z)(zg(z))^2} \frac{\overline{dz}}{\overline{z}^2} \right)=-\frac{1}{2}\left(\frac{1}{\overline{g}^2}\overline{dg}\right)\otimes \left(-\overline{f(z)(zg(z))^2} \frac{\overline{dz}}{\overline{z}^2} \right)\\\\=\frac{1}{2}\overline{dg\otimes f(z)dz}=\frac{1}{2}\overline{g'(z)f(z)dz^2}=-\overline{\left(II(\partial_z,\partial_z)dz^2\right)}.
\end{gather*}
We conclude that
\begin{equation}
T^{*}\left(II(\partial_z,\partial_z)dz^2\right)=-\overline{\left(II(\partial_z,\partial_z)dz^2\right)}, \notag
\end{equation}
as we wanted to prove.
\end{proof}
\begin{ex}
    \normalfont{We want to come back to Examples \ref{Henneberg's minimal surface} and \ref{Meeks example}, in order to test the transformation law of Theorem \ref{thm: transformation law of the Hopf differential} for the Hopf differential under the anti-holomorphic deck involution $T(z)=-\frac{1}{\overline{z}}$. For the Henneberg open Möbius band we verify:
\begin{gather*}
    T^{*}\left(-\frac{z^4-1}{2z^4}dz^2\right)=-\frac{1}{2}\frac{\left(-\frac{1}{\overline{z}}\right)^4-1}{\left(-\frac{1}{\overline{z}}\right)^4}d\left(-\frac{1}{\overline{z}}\right)\otimes d\left(-\frac{1}{\overline{z}}\right)\\\\=-\frac{1}{2}\left(\frac{1-\overline{z}^4}{\overline{z}^4}\right)\overline{dz \otimes dz}=\frac{1}{2}\overline{\left(\frac{z^4-1}{z^4}\right)dz \otimes dz}.
\end{gather*}
Hence
\begin{equation}
     T^{*}\left(-\frac{z^4-1}{2z^4}dz^2\right)=-\left(\overline{-\frac{z^4-1}{2z^4}dz^2}\right),\notag
\end{equation}
in perfect agreement with the transformation law \ref{eq: law transformation for Hopf differential}. In the same way, we can check that the Hopf differential associated to Meeks open Möbius band also transforms as in Theorem \ref{thm: transformation law of the Hopf differential}.}
\end{ex}
\subsection{The Main Theorem}
\noindent We are in the conditions to prove the main theorem:
\begin{thm}\label{thm: main theorem}
    Let $X:(M,h) \rightarrow \mathbb{R}^3$ be a branched minimal immersion of a Möbius band in the Euclidean space $\mathbb{R}^3$. Then $X$ is not free boundary in the unit ball $\mathbb{B}^3$.
\end{thm}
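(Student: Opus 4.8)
The plan is a proof by contradiction that plays the explicit form of the Hopf differential for free boundary annuli (Theorem \ref{thm: Hopf diff free boundary annulus}) against the transformation law under the orientation-reversing deck involution (Theorem \ref{thm: transformation law of the Hopf differential}). Suppose $X:(M,h)\to\RR^3$ were a free boundary branched minimal immersion of a Möbius band in $\BB^3$. By Theorem \ref{thm: Uniformization mobius bands} and Remark \ref{rem: canonical way to see mobius bands} we may realize the orientable double cover as the canonical annulus $(A_R,can)$ with deck transformation $T(z)=-\frac{1}{\overline{z}}$; by Definition \ref{def fbbmi non-orientable}, the lift $\widehat{X}=X\circ\pi:(A_R,can)\to\RR^3$ is itself a free boundary branched minimal immersion of an annulus in $\BB^3$. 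Theorem \ref{thm: Hopf diff free boundary annulus} then pins down its Hopf differential as
\begin{equation*}
II(\partial_z,\partial_z)\,dz^2=\frac{C_0}{z^2}\,dz^2,\qquad C_0\in\RR.
\end{equation*}

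The next step is to compute directly the pullback of this explicit differential by $T$. Since $T(z)=-\overline{z}^{-1}$ gives $dT(z)=\overline{z}^{-2}\,d\overline{z}$, one finds
\begin{equation*}
T^{*}\!\left(\frac{C_0}{z^2}\,dz^2\right)=\frac{C_0}{(-1/\overline{z})^2}\cdot\frac{(d\overline{z})^2}{\overline{z}^4}=\frac{C_0}{\overline{z}^2}\,(d\overline{z})^2=\overline{\left(\frac{C_0}{z^2}\,dz^2\right)},
\end{equation*}
the last equality holding because $C_0$ is real. On the other hand $\widehat{X}$ is non-planar by Remark \ref{rem: Mobius bands cannot be on plane}, so Theorem \ref{thm: transformation law of the Hopf differential} applies and yields $T^{*}(II(\partial_z,\partial_z)\,dz^2)=-\overline{II(\partial_z,\partial_z)\,dz^2}$. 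Comparing the two expressions forces $\overline{II(\partial_z,\partial_z)\,dz^2}=-\overline{II(\partial_z,\partial_z)\,dz^2}$, so the Hopf differential vanishes identically and $C_0=0$.

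It remains to rule out $C_0=0$. In that case the Hopf differential is zero, so by \ref{eq: Second fundamental form as real part of Hopf differential} the second fundamental form of $\widehat{X}$ vanishes identically, i.e. $\widehat{X}$ is totally geodesic and its image lies in a plane; this contradicts Remark \ref{rem: Mobius bands cannot be on plane}. (Equivalently, one may invoke Corollary \ref{cor: annuli are free of umbilical points}: the annulus $\widehat{X}$ is totally geodesic, hence planar.) This final contradiction completes the argument.

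As for difficulty: the argument is short, and the only points requiring care are (i) checking that the free boundary condition on $M$, which Definition \ref{def fbbmi non-orientable} phrases through the lift, really places $\widehat{X}$ in the exact hypotheses of Theorem \ref{thm: Hopf diff free boundary annulus}, in particular that the conformal normalization from Theorem \ref{thm: Uniformization mobius bands} is the canonical annulus used there, and (ii) getting the sign right in the pullback computation $T^{*}(z^{-2}dz^2)=\overline{z^{-2}dz^2}$. It is precisely the clash of this $+$ sign with the $-$ sign of the transformation law \ref{eq: law transformation for Hopf differential} that drives everything, so the real work was done in the preparatory Theorems \ref{thm: Hopf diff free boundary annulus} and \ref{thm: transformation law of the Hopf differential} rather than in their combination here.
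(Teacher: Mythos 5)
Your proposal is correct and follows essentially the same argument as the paper: lift to the annular double cover, invoke the explicit form $C_0 z^{-2}dz^2$ of the Hopf differential, compute $T^{*}(z^{-2}dz^2)=\overline{z^{-2}dz^2}$, and contrast the resulting $+$ sign with the $-$ sign in the transformation law to force $C_0=0$, which is excluded by non-planarity of Möbius bands. The only (immaterial) difference is ordering: the paper fixes $C_0\neq 0$ up front via Remark \ref{rem: Mobius bands cannot be on plane} and Corollary \ref{cor: annuli are free of umbilical points}, whereas you derive $C_0=0$ first and then rule it out.
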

\begin{proof}
Assume by contradiction that $X$ is a free boundary branched minimal immersion in the unit ball $\BB^3$. By Remark \ref{rem: Mobius bands cannot be on plane} this immersion cannot be planar. Then, we have an induced free boundary non-totally geodesic branched minimal immersion $\widehat{X}:(A_R,can)\rightarrow \BB^3$ of the corresponding double cover by the annulus in the unit ball $\BB^3$. According to Theorem \ref{thm: Hopf diff free boundary annulus} and Corollary \ref{cor: annuli are free of umbilical points}, the map $\widehat{X}$ is free of branch points, free of umbilical points and moreover its Hopf differential has the form:
\begin{equation}
    II(\partial_z,\partial_z)dz^2=\frac{C_0}{z^2}dz^2, \notag
\end{equation}
where $C_0$ is a real non-zero constant. Under the anti-holomorphic deck transformation $T(z)=-\frac{1}{\overline{z}}$, the Hopf differential transforms in the following way
\begin{gather*}
T^{*}\left(II(\partial_z,\partial_z)dz^2\right)=T^{*}\left(\frac{C_0}{z^2}dz^2\right)=\frac{C_0}{(-\frac{1}{\overline{z}})^2}\left(d\left(-\frac{1}{\overline{z}}\right)\right)^2\notag\\\\=C_0\overline{z}^2\left(\frac{\overline{dz}}{\overline{z}^2}\right)^2=\overline{\frac{C_0}{z^2}dz^2}= \overline{\left(II(\partial_z,\partial_z)dz^2\right)}. \notag
\end{gather*}
However, from Theorem \ref{thm: transformation law of the Hopf differential}, we obtain
\begin{equation}
\overline{\left(II(\partial_z,\partial_z)dz^2\right)}=T^{*}\left(II(\partial_z,\partial_z)dz^2\right)=-\overline{\left(II(\partial_z,\partial_z)dz^2\right)}.\notag
\end{equation}
This implies that the Hopf differential has to vanish identically, making $C_0=0$, which is a contradiction. In conclusion there are no free boundary branched minimal Möbius bands in the unit ball $\mathbb{B}^3$.
\end{proof}
\section{Application to the Steklov problem}\label{subsection application}
\subsection{Application.} We provide a direct consequence of Theorem \ref{thm: main theorem} by determining a lower bound on the multiplicity of Steklov eigenvalues of critical metrics.\\\\ Let $(\Sigma,h)$ be a compact Riemannian surface with boundary. The induced Dirichlet to Neumann-map $\mathcal{D}_h:C^{\infty}(\partial\Sigma)\rightarrow C^{\infty}(\partial\Sigma)$ is defined for all $u\in C^{\infty}(\partial\Sigma)$ by
\begin{equation*}
    \mathcal{D}_hu=\frac{\partial \widehat{u}}{\partial \nu^{\Sigma}},
\end{equation*}
where $\widehat{u}$ is the harmonic extension of $u$ to $\Sigma$ i.e. $\Delta_{\Sigma} \widehat{u}=0$. The operator $\mathcal{D}_h$ is self-adjoint, pseudodifferential and has discrete spectrum
\begin{equation*}
    0=\sigma_0(\Sigma,h)< \sigma_1(\Sigma,h)\leq \sigma_2(\Sigma,h)\leq \ldots \leq \sigma_k(\Sigma,h) \leq \ldots
\end{equation*}
The normalized $k$-Steklov eigenvalue $\overline{\sigma_k}$ is defined by 
\begin{equation*}
   \overline{\sigma_k}(\Sigma,h)=\sigma_k(\Sigma,h) \text{Length($\partial \Sigma,h)$}.
   \end{equation*}
\noindent Based on the definition of M. Karpukhin and A. Métras \cite[Definition 3.1]{Karpukhin}, we say that a metric $h$ is critical for the $k$-normalized Steklov eigenvalue $\overline{\sigma_k}$ if for all one-parameter smooth family of metrics $h(t)$ on $\Sigma$ with $h(0)=h$, we have either
\begin{equation*}
    \overline{\sigma_k}(\Sigma,h(t))\leq \overline{\sigma_k}(\Sigma,h)+o(t)\hspace{0.3cm}\text{or} \hspace{0.3cm} \overline{\sigma_k}(\Sigma,h)+o(t)\leq \overline{\sigma_k}(\Sigma,h(t)),
\end{equation*} as $t\rightarrow 0$. In this context we can formulate the following
\begin{thm}[{ \cite[Proposition 2.4]{FraserSchoen2013}, \cite[Remark 2.3.2]{Karpukhin}}]\label{thm: extremal metrics induce free boundary surfaces}
    If $\Sigma$ is a compact surface with boundary, and $h$ is a critical metric for the $k$-normalized Steklov eigenvalue $\overline{\sigma_k}$ in $\Sigma$, then there exist independent $k$-eigenfunctions $u_1,\ldots, u_n$ which give a free boundary minimal immersion $u=(u_1,\ldots,u_n):(\Sigma,h)\rightarrow \BB^n$.
\end{thm}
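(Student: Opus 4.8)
\emph{Proof proposal.}
The plan is to run the variational argument of Fraser and Schoen \cite{FraserSchoen2013}, in the generality of critical (not merely maximizing) metrics formulated by Karpukhin and M\'etras \cite{Karpukhin}. Fix a smooth one-parameter family $h(t)$ of metrics on $\Sigma$ with $h(0)=h$, write $\dot h=\partial_t h|_{t=0}$, and let $E_k\subset C^\infty(\partial\Sigma)$ be the eigenspace of the Dirichlet-to-Neumann operator $\mathcal D_h$ for $\sigma_k=\sigma_k(\Sigma,h)$. Since eigenvalue branches may cross, $t\mapsto\overline{\sigma_k}(\Sigma,h(t))$ need not be differentiable at $t=0$, but standard one-sided perturbation theory shows that the left and right derivatives of $\overline{\sigma_k}$ at $t=0$ are bounded, on the appropriate sides, by the minimum and maximum over $\mathcal D_h$-normalized $u\in E_k$ of an explicit quadratic functional $Q_{\dot h}(u)$ depending linearly on $\dot h$; concretely $Q_{\dot h}$ is the directional derivative in the direction $\dot h$ of the normalized Rayleigh quotient $u\mapsto\bigl(\int_\Sigma|\nabla\widehat u|_{h}^2\bigr)\,\mathrm{Length}(\partial\Sigma,h)\big/\bigl(\int_{\partial\Sigma}u^2\bigr)$, so the length factor appearing in $\overline{\sigma_k}$ is built in. Criticality of $h$ then says exactly that $Q_{\dot h}$ is never positive definite on $E_k$ and never negative definite on $E_k$, for any $\dot h$. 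Regarding $\dot h\mapsto Q_{\dot h}$ as a linear map into the finite-dimensional space of quadratic forms on $E_k$, its image is a subspace disjoint from both the open cone of positive-definite forms and that of negative-definite forms; a Hahn--Banach separation argument, as in the work of Nadirashvili and of El Soufi--Ilias, then forces $0$ into the closed convex hull of a symmetric generating family $\{Q_{\dot h}\}$. Unwinding a relation $0=\sum_j t_j Q_{\dot h_j}$ produces finitely many eigenfunctions $u_1,\dots,u_n\in E_k$ whose balancing is encoded by two identities on $\partial\Sigma$: the relation $\sum_i u_i^2\equiv\mathrm{const}$, coming from conformal directions $\dot h$, and the vanishing on $\partial\Sigma$ of the holomorphic quadratic differential $\langle u_z,u_z\rangle\,dz^2$, coming from trace-free directions.

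After rescaling so that $\sum_i u_i^2\equiv1$ on $\partial\Sigma$, set $u=(u_1,\dots,u_n)\colon\Sigma\to\RR^n$. Each $u_i$ is harmonic in $\mathring\Sigma$, being the harmonic extension of its boundary trace, so $u$ is harmonic; applying the maximum principle to $|u|^2$ gives $|u|\le1$ on $\Sigma$, hence $u(\Sigma)\subset\BB^n$ with $u(\partial\Sigma)\subset\partial\BB^n$. The quadratic differential $\langle u_z,u_z\rangle\,dz^2$ is holomorphic and, by the second balancing identity, vanishes on $\partial\Sigma$, so it vanishes identically; thus $u$ is weakly conformal and therefore a (branched) minimal immersion. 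Finally, on $\partial\Sigma$ the Steklov equation reads $\partial u/\partial\nu^\Sigma=\sigma_k\,u$, and there $u$ is the outward unit normal of $\partial\BB^n$, so the conormal $\partial u/\partial\nu^\Sigma$ is normal to $\partial\BB^n$: hence $u$ is a free boundary minimal immersion of $(\Sigma,h)$ into $\BB^n$ given by $\sigma_k$-eigenfunctions, and passing to a maximal linearly independent subfamily yields the independent eigenfunctions $u_1,\dots,u_n$ asserted in the statement.

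The main obstacle is the middle of the first paragraph: upgrading the pointwise-in-$t$ criticality inequalities — genuinely complicated by the non-smoothness of $\sigma_k$ at spectral crossings and the need to handle one-sided derivatives of $\overline{\sigma_k}$ along arbitrary families $h(t)$ — to the purely algebraic conclusion that a vanishing convex combination of the forms $Q_{\dot h}$ exists. This requires both a careful one-sided spectral perturbation theory for the Dirichlet-to-Neumann operator under metric deformation, and the convex-geometry lemma that a linear subspace of a space of quadratic forms missing both definite cones contains the origin in the convex hull of any symmetric generating set. Once that machinery is in place, the identification of the resulting balanced harmonic map with a free boundary minimal immersion, carried out in the second paragraph, is routine.
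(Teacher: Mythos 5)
First, a point of calibration: the paper does not actually prove this theorem --- it is imported as a black box from \cite[Proposition 2.4]{FraserSchoen2013} together with \cite[Remark 2.3.2]{Karpukhin} and used only in the proof of Corollary \ref{introcor} --- so there is no in-paper argument to compare yours against; what follows measures your sketch against the standard proof in those references. Your second paragraph is correct and is essentially how that proof concludes: harmonicity of the $u_i$, subharmonicity of $|u|^2$ giving $u(\Sigma)\subset\BB^n$, vanishing of the holomorphic quadratic differential $\langle u_z,u_z\rangle\,dz^2$ giving (branched) conformality and hence minimality, and $\partial u/\partial\nu^{\Sigma}=\sigma_k u$ together with $|u|\equiv 1$ on $\partial\Sigma$ giving the free boundary condition.

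The genuine gap is the convexity step, which you yourself flag as the main obstacle and then mis-state. The object whose closed convex hull must contain $0$ is not the family of quadratic forms $\{Q_{\dot h}\}$: that family is (the image of) a linear subspace, the assertion that ``$0$ lies in the convex hull of a symmetric generating set'' is trivially true for any symmetric set (take $\tfrac12 x+\tfrac12(-x)$) and uses neither criticality nor indefiniteness, and a relation $0=\sum_j t_j Q_{\dot h_j}$ among forms indexed by metric variations produces no eigenfunctions whatsoever. The argument of Nadirashvili, El Soufi--Ilias and Fraser--Schoen instead applies Hahn--Banach separation to the image of the connected set $S=\{u\in E_k:\ \|u\|_{L^2(\partial\Sigma)}=1\}$ under the nonlinear balancing map $u\mapsto\Phi(u)$, valued in a Hilbert space of pairs (interior trace-free tensor, boundary function) and chosen so that $\langle\Phi(u),\dot h\rangle=Q_{\dot h}(u)$. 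Criticality says that every such linear functional changes sign on $\Phi(S)$; if $0\notin\overline{\mathrm{conv}}\,\Phi(S)$ one separates strictly and contradicts this; and a vanishing finite convex combination $\sum_j t_j\Phi(u_j)=0$ with $u_j\in S$, $t_j>0$, is what hands you the eigenfunctions $\sqrt{t_j}\,u_j$ satisfying the two balancing identities ($\sum_i u_i^2$ constant on $\partial\Sigma$ and $\sum_i \langle (u_i)_z,(u_i)_z\rangle=0$). With that replacement, and with the one-sided perturbation theory for $\mathcal{D}_{h(t)}$ carried out as in \cite{FraserSchoen2013} to justify the bounds on the one-sided derivatives of $\overline{\sigma_k}$, your outline becomes the standard proof of the cited result.
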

\begin{cor}
    If $h$ is a critical metric for the $k$-normalized Steklov eigenvalue $\overline{\sigma_k}$ on a Möbius band then the multiplicity of $\sigma_k$ is at least four.
\end{cor}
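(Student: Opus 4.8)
The plan is to combine Theorem \ref{thm: main theorem} with the structural result of Theorem \ref{thm: extremal metrics induce free boundary surfaces}. Suppose $h$ is a critical metric for $\overline{\sigma_k}$ on a Möbius band $M$. By Theorem \ref{thm: extremal metrics induce free boundary surfaces} there are independent $k$-eigenfunctions $u_1,\dots,u_n$ assembling into a free boundary minimal immersion $u=(u_1,\dots,u_n):(M,h)\rightarrow\BB^n$. The number $n$ is at most the multiplicity of $\sigma_k$ (the $u_i$ lie in the $\sigma_k$-eigenspace and are independent), so it suffices to rule out $n\le 3$. If $n\le 3$ we could view $u$ as a free boundary minimal immersion into $\BB^3$ (composing with a totally geodesic inclusion $\BB^n\hookrightarrow\BB^3$ when $n<3$), and since $u$ is a genuine immersion it is in particular a branched minimal immersion of a Möbius band in $\RR^3$ that is free boundary in $\BB^3$. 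This contradicts Theorem \ref{thm: main theorem}. Hence $n\ge 4$, and the multiplicity of $\sigma_k$ is at least four.

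First I would make precise the inequality $n\le\operatorname{mult}(\sigma_k)$: each $u_i$ satisfies $\mathcal{D}_h u_i = \sigma_k u_i$ on $\partial M$ — this is exactly what it means to be a $k$-eigenfunction in the sense used in Theorem \ref{thm: extremal metrics induce free boundary surfaces} — and the $u_i$ being independent as functions forces the eigenspace of $\sigma_k$ to have dimension at least $n$. Second, I would handle the reduction to $\BB^3$: for $n\le 3$, pick any isometric totally geodesic embedding of $\BB^n$ into $\BB^3$ (equatorial inclusion) and note that post-composing a free boundary minimal immersion with it preserves harmonicity, conformality, the boundary condition $u(\partial M)\subset\partial\BB^3$, and orthogonality of the conormal, since the equatorial $\BB^n$ meets $\partial\BB^3$ orthogonally. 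So $u$ becomes a free boundary (unbranched, hence in particular branched) minimal Möbius band in $\BB^3$. Third, I would invoke Theorem \ref{thm: main theorem} to get the contradiction.

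There is essentially no hard step here; the only point requiring a little care is that the conclusion of Theorem \ref{thm: extremal metrics induce free boundary surfaces}, as stated, gives a free boundary minimal \emph{immersion} on the Möbius band, which a priori must be interpreted through the double cover as in Definition \ref{def fbbmi non-orientable} — but this is precisely the setting of Theorem \ref{thm: main theorem}, which rules out even branched such immersions, so no extra argument is needed. One should also note for completeness that $\sigma_k\neq 0$ for $k\ge 1$ and that the eigenfunctions $u_i$ are nonconstant (else $u$ would be constant, not an immersion), so the application of Theorem \ref{thm: main theorem} is legitimate. The bound four is sharp in spirit given the Fraser--Schoen Möbius band in $\BB^4$, whose coordinate functions furnish a four-dimensional family of $\sigma_1$-eigenfunctions.
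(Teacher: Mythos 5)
Your proposal is correct and follows essentially the same route as the paper: invoke the Fraser--Schoen/Karpukhin--M\'etras criticality result to obtain a free boundary minimal immersion of the M\"obius band by $k$-eigenfunctions into some $\BB^n$, then rule out $n\le 3$ to bound the multiplicity below by four. The only cosmetic difference is that you fold the low-dimensional cases $n=1,2$ into the $\BB^3$ case via an equatorial inclusion, whereas the paper dispatches them separately (dimension count for $n=1$, the non-planarity remark for $n=2$) before applying Theorem \ref{thm: main theorem} for $n=3$; both treatments are valid.
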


\begin{proof}
By Theorem \ref{thm: extremal metrics induce free boundary surfaces} there exists a free boundary minimal immersion of a Möbius band with metric $h$ by $k$-eigenfunctions in some unit ball. This unit ball cannot be the interval $[0,1]$ by dimensional reasons, nor the disk $\mathbb{D}$ by Remark \ref{rem: Mobius bands cannot be on plane} and nor the three ball $\BB^3$ by Theorem \ref{thm: main theorem}. Therefore the dimension of the ball should be greater or equal than four, and hence the multiplicity of the $k$-Steklov eigenspace must be at least four.
\end{proof}

\textsc{C. Toro: Impa, Rio de Janeiro RJ 22460-320 Brazil}\\\textit{Email address:} {\tt \textbf{carlos.toroc@impa.br}}
\end{document}